\documentclass[12pt]{amsart}
\usepackage{amsfonts,amsthm,latexsym,amsmath,amssymb,amscd,amsmath, mathrsfs, url, cite, graphicx }
\usepackage[pdftex]{color}
\usepackage[bookmarks=true,hyperindex,pdftex,colorlinks, citecolor=blue,linkcolor=blue, urlcolor=blue]{hyperref}
 \newcommand{\sgn}{\operatorname{sgn}}
\newcounter{count}
\numberwithin{count}{section}
\newtheorem{Lemma}[count]{Lemma}
\newtheorem{Remark}[count]{Remark}
\newtheorem{Example}[count]{Example}
\newtheorem{Definition}[count]{Definition}
\newtheorem{theorem}[count]{Theorem}
\newtheorem{Corollary}[count]{Corollary}
\newtheorem{Theorem}[count]{Theorem}
\newtheorem{Conjecture}[count]{Conjecture}
\newtheorem{Problem}[count]{Open Problem}
\newtheorem{Statement}[count]{Statement}

\begin{document}

\author[A.~Vishnyakova]{Anna Vishnyakova}

\address{Department of Mathematics, Holon Institute of Technology,
Israel}
\email{annalyticity@gmail.com}

\title[Polynomially Deformed Normalized Pochhammer Sequences ]
{Polynomially Deformed Normalized Pochhammer Sequences Having 
Generating Functions With Only Real Non-positive Zeros}
\maketitle

\begin{abstract}

For a given real number $a>0$ 
and a given real polynomial $P_n\in \mathbb{R}[x]$ of degree $n=0, 1, 2, \ldots$
it is easy to see that  $ \sum_{k=0}^\infty \frac{(a)_k}{k!} P_n(k) z^k =\frac{S_{n, a}(z)}{(1-z)^{a+ n}}, 
\   |z|<1,  $ where $S_{n, a}$ is a real polynomial of degree not greater than $n.$
Here $(a)_k =a(a+1)\cdot \ldots \cdot (a+k-1),\    (a)_0 = 1,$ is the  rising factorial, or the 
Pochhammer symbol. We consider the following open problem: 
to describe the set of real polynomials $P_n\in \mathbb{R}[x]$ 
of degree $n=0, 1, 2, \ldots,$ such that the corresponding polynomial $S_{n, a}$
has all real non-positive zeros. In the case $a=1$ this problem has been studied 
in \cite{vish}. We establish several new necessary conditions 
and  several sufficient conditions, present a number of  important 
examples, and formulate several open problems.

\end{abstract}

\keywords {Real-rooted polynomials; totally positive sequences; 
rising factorial; the Pochhammer symbol; the  Laguerre-P\'olya class; 
$f$-Eulerian polynomials;   orthogonal polynomials}

\subjclass{30C15; 15B48; 30D15;  26C10; 30D99}

\section{Introduction}

\begin{Definition}
 A sequence of nonnegative numbers $(a_k)_{k=0}^\infty$
is called  totally positive if all minors of the
infinite matrix

\begin {equation}
\label{mat}
 \left(
  \begin{array}{ccccc}
   a_0 & a_1 & a_2 & a_3 &\ldots \\
   0   & a_0 & a_1 & a_2 &\ldots \\
   0   &  0  & a_0 & a_1 &\ldots \\
   0   &  0  &  0  & a_0 &\ldots \\
   \vdots&\vdots&\vdots&\vdots&\ddots
  \end{array}
 \right)
\end {equation}
 are non-negative. 
\end{Definition}

The class of totally positive sequences is denoted by $\mathrm{TP}.$
The class of generating functions of totally positive sequences
$f(x)=\sum_{k=0}^\infty a_k x^k$
is  denoted by  $\widetilde{\mathrm{TP}}.$

The concept of total positivity has found numerous applications and
has been studied from many different perspectives (see, for example, 
T.~Ando, \cite{ando}, S.~Karlin,  \cite{tp},  or A.~Pinkus, \cite{pin}).  It has 
applications  in the distribution of zeros of polynomials and entire functions,  
P\'olya frequency sequences, unimodality and log-concavity,  stochastic 
processes and approximation theory, mechanical systems,  planar  
networks, combinatorics, and representation theory.

Checking whether all minors of an infinite matrix are nonnegative is
often a very difficult task. The paper \cite{kv} contains an easily verifiable 
sufficient condition.

The class $\widetilde{\mathrm{TP}}$ was completely characterized by the
classical theorem by M.~Aissen, A.~Edrei, I.J.~Schoenberg,  and A.~Whitney.

{\bf  Theorem A} (M.~Aissen, A.~Edrei, I.J.~Schoenberg, A.~Whitney, \cite{aissen}).
{ \it  Let $(a_k)_{k=0}^\infty$  be a given sequence of nonnegative numbers.  
Then $f(z)=\sum_{k=0}^\infty a_kz^k \in \widetilde{\mathrm{TP}}$ if and
only if
$$f(z)=C z^n e^{\gamma z}\prod_{k=1}^\infty (1+\alpha_kz)/(1-\beta_kz),$$
where $C\ge 0, n\in \mathbb{ N}\cup\{0\},
\gamma\ge0,\alpha_k\ge0,\beta_k\ge0,\sum(\alpha_k+\beta_k)
<\infty.$}

The following fact is an immediate corollary of Theorem~A.

\begin{Corollary}  A polynomial with nonnegative coefficients 
$P(x) =\sum _{k=0}^n a_k x^k $   has only real zeros if and only 
if the sequence of its coefficients is totally positive: $(a_0,
a_1, \ldots , a_n, 0, 0, \ldots ) \in \mathrm{TP}.$
\end{Corollary}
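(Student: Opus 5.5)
Both directions will be read off from Theorem~A applied to the finite sequence $(a_0, a_1, \ldots , a_n, 0, 0, \ldots)$. The generating function of this sequence is the polynomial $P$ itself. We may assume $P \not\equiv 0$, since otherwise both sides hold trivially (the zero sequence is in $\mathrm{TP}$).

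\emph{Necessity of real zeros.} Suppose $(a_0, \ldots, a_n, 0, \ldots)\in \mathrm{TP}$. By Theorem~A,
$$P(z)=C z^m e^{\gamma z}\prod_k \frac{1+\alpha_k z}{1-\beta_k z}.$$
Since $P$ is a polynomial, it has no poles, so every $\beta_k$ must be $0$; a nonzero $\beta_k$ would give a pole at $1/\beta_k$ that no factor could cancel. The factor $e^{\gamma z}$ is entire and zero-free, so the quotient $P(z)/\bigl(Cz^m\prod(1+\alpha_k z)\bigr)$ has polynomial growth. It follows that $\gamma=0$ and that only finitely many $\alpha_k$ are nonzero. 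The one point needing care is that the infinite product could a priori be transcendental. But the zeros of the product are exactly the points $-1/\alpha_k$, and these must be zeros of $P$, so only finitely many are nonzero. Once that is known, comparing growth forces $\gamma=0$. Hence $P(z)=Cz^m\prod_{k=1}^{n-m}(1+\alpha_k z)$ with $\alpha_k\ge 0$, and all its zeros are real (indeed non-positive).

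\emph{Sufficiency.} Conversely, let $P$ have nonnegative coefficients and only real zeros. A positive real number cannot be a zero, because $P(x)>0$ for $x>0$ when the coefficients are nonnegative and not all zero. So every zero is $\le 0$, and we can write
$$P(z)=a_N z^m\prod_{k}(1+\alpha_k z), \qquad \alpha_k\ge0,$$
where $a_N>0$ is the coefficient of the lowest nonzero power $z^m$. This is exactly of the form in Theorem~A, with $\gamma=0$ and all $\beta_k=0$. Hence the sequence of coefficients is totally positive.

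No real obstacle is expected. The only subtle point is the necessity direction, where one must rule out the exponential factor and the infinite product; the finiteness-of-zeros argument above handles it.
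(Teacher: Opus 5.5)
Your proposal is correct and follows the paper's route: the paper records this as an immediate corollary of Theorem~A, and you fill in the routine details in both directions. In one direction, the absence of poles forces all $\beta_k=0$, the finiteness of the zero set of $P$ forces only finitely many nonzero $\alpha_k$, and growth then forces $\gamma=0$; in the other, nonnegative coefficients exclude positive zeros, so $P=a_m z^m\prod(1+\alpha_k z)$ with $\alpha_k\ge 0$. Your remark that the quotient ``has polynomial growth'' comes before finiteness of the $\alpha_k$ is established, but you then give the steps in the right order, so nothing is missing.
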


In \cite{vish},  the following problem was studied: for which real polynomials
$P$ is the sequence $(P(k))_{k=0}^\infty$ totally positive? Using Theorem~A, we 
can equivalently reformulate this problem as follows. It is easy to see that,
for a real polynomial $P_n$ of degree $n=0, 1, 2, \ldots,$ we have
\begin{equation}
\label{ll1}
\sum_{k=0}^\infty P_n(k) z^k =\frac{S_n(z)}{(1-z)^{n+1}},\    |z|<1, 
\end{equation}
where $S_n$ is a real polynomial of degree not greater than $n.$ The
sequence $(P_n(k))_{k=0}^\infty,  P_n \not\equiv 0,$ is totally positive if and only if the
polynomial $S_n$ has all real non-positive zeros and positive leading 
coefficient.

In this paper, we will study a generalization of the above problem, namely, 
we are interested in  the following open problem. For a given real number $a>0$ 
and a given real polynomial $P_n\in \mathbb{R}[x]$ of degree $n=0, 1, 2, \ldots,$
it is easy to see that
\begin{eqnarray}
\label{ll2}
& \sum_{k=0}^\infty \frac{(a)_k}{k!} P_n(k) z^k =\frac{S_{n, a}(z)}{(1-z)^{a+ n}}, 
\\ \nonumber &
\   |z|<1, \   \arg(1-x)=0 \  \mbox{for} \   0 \leq x <1,
\end{eqnarray}
where $S_{n, a}$ is a real polynomial of degree not greater than $n$
(we  prove this formula below). Here, as usual, $(a)_k =a(a+1)\cdot \ldots \cdot (a+k-1), 
(a)_0 = 1,$  is the  rising factorial, or the 
Pochhammer symbol. The following open problem is of interest to us.

\begin{Problem}
\label{pr1}
Given a real number $a>0,$  describe the set of real polynomials $P_n\in \mathbb{R}[x]$ 
 of degree $n=0, 1, 2, \ldots$ such that the corresponding polynomial $S_{n, a}$ in (\ref{ll2})
 has all real non-positive zeros.
\end{Problem}

We consider the following simple Taylor series identity
$$ \sum_{k=0}^\infty \frac{(a)_k}{k!} z^k = \frac{1}{(1-z)^a}, $$
$$ |z| <1,  \   \arg(1-x)=0 \  \mbox{for} \   0 \leq x <1.  $$ 
For $m\in \mathbb{N}\cup \{0\},$ differentiate this identity $m$ times
and multiply both sides by  $\frac{x^m}{a(a+1)\cdot \ldots \cdot (a+m-1)}.$  We obtain
\begin{equation}
\label{f1}
\sum_{k=0}^\infty \frac{(a)_k}{k!} \cdot \frac{k(k-1)\cdot \ldots \cdot (k-m+1)}{a(a+1)\cdot \ldots \cdot (a+m-1)} z^k
=\frac{z^m}{(1-z)^{a+m}}.
\end{equation}

Let $P_n\in \mathbb{R}[x]$ be a real polynomial of degree $n\in \mathbb{N}.$ 
Let us decompose this polynomial in the form
\begin{eqnarray}
\label{f2}
&
P(x)= a_0 + a_1 \frac{x}{a} +a_2 \frac{x(x-1)}{a(a+1)} +a_3 \frac{x(x-1)(x-2)}{a(a+1)(a+2)}\\
\nonumber &
+  \ldots +a_n \frac{x(x-1)\cdot \ldots \cdot (x-n+1)}{a(a+1)\cdot \ldots \cdot (a+n-1)}.
\end{eqnarray}
Formula  (\ref{f1}) yields 
\begin{eqnarray}
\label{f3}
&
\sum_{k=0}^\infty P(k) x^k= \frac{a_0}{(1-x)^a} +  \frac{a_1 x}{(1-x)^{a+1}} 
+ \frac{a_2 x^2}{(1-x)^{a+2}} + \ldots + \frac{a_n x^n}{(1-x)^{a+n}}\\
\nonumber &
=\frac{1}{(1-x)^a} \left(a_0 + a_1 \cdot \frac{x}{1-x} + a_2 \cdot \frac{x^2}{(1-x)^2} +
\ldots +  a_n \cdot \frac{x^n}{(1-x)^n} \right) \\
\nonumber &
=\frac{1}{(1-x)^{a+n}} \left(a_0 (1-x)^n + a_1 x(1-x)^{n-1} + a_2 x^2(1-x)^{n-2} +
\ldots +  a_nx^n\right).
\end{eqnarray}
Thus, 
$$S_{n, a}(x)=a_0 (1-x)^n + a_1 x(1-x)^{n-1} + a_2 x^2(1-x)^{n-2} +
\ldots +  a_n x^n,$$ 
and we will study  under which assumptions on $P_n$ all zeros
of $S_{n, a}$ are real and non-positive.

Denote by 
$$Q_{n, a}(x) =a_0 +a_1 x +a_2 x^2 + \ldots + a_n x^n.$$
We have
\begin{equation}
\label{ff1}
S_{n, a}(x) =(1-x)^n Q_{n, a}\left(\frac{x}{1-x}\right).
\end{equation}
Thus, all zeros of the polynomial $S_{n, a}$ are real and non-positive if and only if all zeros of the 
polynomial $Q_{n, a}$ are real and belong to the segment $[-1, 0]$ (we note that the possible zero 
of $Q_{n, a}$ at the point $-1$ does not correspond to any zero of $S_{n, a}$).
Motivated by this observation, we define the following linear operator.

\begin{Definition} Let $\mathcal{L}_a$  denote the linear operator  $\mathcal{L}_a : \mathbb{R}[x] \rightarrow 
\mathbb{R}[x]$ such that
\begin{eqnarray}
\label{f4}
&  \mathcal{L}_a \left(a_0 + a_1 \frac{x}{a} +a_2 \frac{x(x-1)}{a(a+1)} +a_3 \frac{x(x-1)(x-2)}{a(a+1)(a+2)}
+  \ldots +\right.
\\  \nonumber &
\left. a_n \frac{x(x-1)\cdot \ldots \cdot (x-n+1)}{a(a+1)\cdot \ldots \cdot (a+n-1)}\right)
= a_0 +a_1 x +a_2 x^2 + \ldots + a_n x^n.
\end{eqnarray}
\end{Definition}
The previous discussion shows that  Open problem~\ref{pr1}  is equivalent to 
the following  problem.
\begin{Problem}
\label{pr2}
To describe the set of real polynomials $P\in \mathbb{R}[x]$  such that  the 
polynomials $\mathcal{L}_a(P)$ have only real zeros lying
in the segment $[-1, 0].$
\end{Problem}

\begin{Remark}
\label{r1}
Suppose that a polynomial 
$$P(x)= a_0 + a_1 \frac{x}{a} +a_2 \frac{x(x-1)}{a(a+1)} +a_3 \frac{x(x-1)(x-2)}{a(a+1)(a+2)}
+  \ldots +  $$
$$ a_n \frac{x(x-1)\cdot \ldots \cdot (x-n+1)}{a(a+1)\cdot \ldots \cdot (a+n-1)}$$
is such that
$$\mathcal{L}_a(P)(x)
= a_0 +a_1 x +a_2 x^2 + \ldots + a_n x^n$$ 
has only real zeros lying in the segment $[-1, 0].$ Then all
coefficients of $\mathcal{L}_a(P)$ have the same sign. If $a_n >0,$
then there exists $l=0, 1, 2, \ldots, n$ such that $a_j=0$ for all
$j < l,$ and $a_j >0$ for $j=l, l+1, \ldots, n.$

\end{Remark}

The following theorem by F.~Brenti provides a simple sufficient condition for the case $a=1.$

{\bf Theorem B} (F.\,Brenti, \cite{BrentiMemoir}, see also \cite{BrentiContMath}, c.f. \cite[Lemma 1]{kav}). {\it  Let 
$P\in \mathbb{R}[x]$ be a polynomial with only real zeros, and let $\lambda(P), \Lambda(P)$ 
be the smallest and the largest zeros of $P.$ Suppose that $P(x)=0$ for all $x\in 
\left(\left[\lambda(P), -1\right] \cup \left[0, \Lambda(P)\right]\right) \cap \mathbb{Z}.$  
Then the polynomial $\mathcal{L}_1(P)$ has only real zeros 
lying in the segment $[-1, 0].$ }

In \cite{kav}, the following generalization of  F.~Brenti's result for $a>0$ and for polynomials 
with only real non-positive zeros was obtained.

{\bf Theorem C} (Dmitrii Karp and Anna Vishnyakova, \cite[Theorem 1]{kav}). {\it  Let $a>0,$
and let  $P\in \mathbb{R}[x]$ be a polynomial with only real non-positive zeros.  Let $\lambda(P)$ 
be the smallest  zero of $P.$ Suppose that $P(x)=0$ for all $x\in 
\left(\left[\lambda(P), -a\right] \cap \{ -a, -a-1, -a-2, \ldots  \}\right).$  
Then the polynomial $\mathcal{L}_a(P)$ has only real zeros 
lying in the segment $[-1, 0].$ }

Our first result is the following generalization of  F.~Brenti's result valid for every $a>0$ 
and  for polynomials with only real  zeros. 

\begin{Theorem}
\label{Th1.0}
Let $a>0,$ and $P\in \mathbb{R}[x]$ be a polynomial with only real zeros, and let 
$\lambda(P), \Lambda(P)$ be the smallest and the largest zeros of $P.$  Suppose 
that $P(x)=0$ for all $x\in \left(\left[\lambda(P), -a\right] \cap \{ -a, -a-1, -a-2, \ldots  \}\right)
\cup \left(\left[0, \Lambda(P)\right] \cap \mathbb{Z}\right).$  
Then the polynomial $\mathcal{L}_a(P)$ has only real zeros 
lying in the segment $[-1, 0].$ 
\end{Theorem}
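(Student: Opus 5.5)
The plan is to factor $P$ into linear factors and build $\mathcal{L}_a(P)$ one factor at a time. This works because multiplication by a linear factor acts on the $\mathcal{L}_a$-side as a first-order differential operator to which Rolle's theorem applies.

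\textbf{Step 1: the intertwining identity.} Put $e_m(x)=\frac{x(x-1)\cdots(x-m+1)}{(a)_m}$, so that $\mathcal{L}_a(e_m)=x^m$. Since $e_{m+1}=e_m\frac{x-m}{a+m}$, we have $x e_m=(a+m)e_{m+1}+m e_m$. Applying $\mathcal{L}_a$ gives $x^m\mapsto (a+m)x^{m+1}+mx^m = a x\cdot x^m+x(x+1)(x^m)'$. Hence, writing $Q=\mathcal{L}_a(P)$, for every real $c$,
$$\mathcal{L}_a\big((x-c)P\big)=T_cQ:=x(x+1)Q'+(ax-c)Q = x^{1+c}(x+1)^{1-a-c}\,\frac{d}{dx}\Big[x^{-c}(x+1)^{a+c}Q(x)\Big]$$
on $(-1,0)$. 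Two properties are read off directly: $\deg T_cQ=\deg Q+1$ with leading coefficient multiplied by $a+\deg Q>0$, and $T_c$ roughly preserves the orders of vanishing of $Q$ at $0$ and at $-1$. In particular, if $Q=x^k(x+1)^j\widetilde Q$, then $T_cQ=x^k(x+1)^j\big[x(x+1)\widetilde Q'+((a+k+j)x+k-c)\widetilde Q\big]$. So the factor $x^k(x+1)^j$ is kept, and the bracket is the same operator with $c$ replaced by $c-k$ and $a$ replaced by $a+j+k$, i.e. the analogous operator $\frac{d}{dx}\big[x^{k-c}(x+1)^{a+j+c}\widetilde Q\big]$ up to the weights.

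\textbf{Step 2: the Rolle lemma.} Suppose $Q=x^k(x+1)^j\widetilde Q$ has only real zeros in $[-1,0]$, with $\widetilde Q$ having $m$ zeros in $(-1,0)$ counted with multiplicity. Let $c\in(-a-j,k)$. Then $g(x)=x^{k-c}(x+1)^{a+j+c}\widetilde Q(x)$ tends to $0$ at both endpoints of $[-1,0]$ and has $m$ interior zeros. Rolle's theorem, together with the multiplicity count, gives $m+1$ zeros of $g'$ in $(-1,0)$. Therefore the bracket, which has degree $m+1$, has all its zeros in $(-1,0)$, and $T_cQ$ has only real zeros in $[-1,0]$. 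The boundary cases $c=k$ and $c=-a-j$ follow by continuity in $c$, or by direct inspection: the bracket then acquires a zero exactly at $0$, respectively at $-1$.

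\textbf{Step 3: ordering the factors.} Let $K$ be the number of nonnegative integers $\le\Lambda(P)$, and $J$ the number of points $-a,-a-1,\dots$ lying in $[\lambda(P),-a]$. By hypothesis
$$P=C\,x(x-1)\cdots(x-K+1)\,(x+a)(x+a+1)\cdots(x+a+J-1)\,R(x),$$
where every zero of $R$ lies in $(-a-J,K)$; this uses $\Lambda<K$ and $\lambda>-a-J$. Start from $Q_0=\mathcal{L}_a(C)=C$ and multiply in the factors in this order: first $x-i$ for $i=0,\dots,K-1$, then $x+a+i$ for $i=0,\dots,J-1$, and finally the zeros of $R$ in arbitrary order. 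I must check that each $c$ satisfies the condition of Step 2 at the moment it is used.
\begin{itemize}
\item For $c=i$, the current $Q$ is a constant times $x^i$, so $k=i=c$, which is the allowed boundary case; the result is a multiple of $x^{i+1}$, consistent with $\mathcal{L}_a(e_{i+1})=x^{i+1}$.
\item For $c=-a-i$, I expect the order $j$ at $-1$ to have become $i$ at that moment, which is again the boundary case.
\item Afterwards $k\ge K$ and $j\ge J$ persist, so every remaining zero $c$ of $R$ lies in $(-a-j,k)$.
\end{itemize}
Induction then gives the theorem.

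\textbf{Main obstacle.} The delicate point is the bookkeeping of the exact orders of vanishing at $0$ and $-1$ through each step. One must confirm that multiplying by $x+a+i$ really raises the order at $-1$ from $i$ to $i+1$, and never lowers the order at $0$. This presumably requires the explicit computation $T_{-a-j}\big(x^k(x+1)^j\widetilde Q\big)=x^k(x+1)^{j+1}(\cdots)$ and the same computation at $0$. The degenerate boundary cases $c=k$ and $c=-a-j$ also need care. I would handle them by the limiting argument, using Hurwitz's theorem on the real line, applied to the polynomials $T_cQ$ as $c$ approaches the endpoint.
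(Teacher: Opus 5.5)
Your proof is correct. Its engine is the same as the paper's: multiplying $P$ by $x-c$ becomes a first-order differential operator on the image side, and zeros are counted by a Rolle/intermediate-value argument. The two proofs differ in coordinates and organization.

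\emph{The paper's route.} It works with the numerator $S$ of the generating function, where the operator is $S\mapsto x(1-x)S'+((n+a+\beta)x-\beta)S$; this is your $T_\beta$ transported by $x\mapsto x/(1-x)$. It counts sign changes of the new $S$ at the zeros of the old one. It applies this only to the positive zeros of $P$ and handles the non-positive part by quoting Theorem~C.

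\emph{What your route buys.} Your single lemma for $c\in(-a-j,k]$ treats all zeros alike, so Theorem~C is reproved rather than cited. Rolle with multiplicities also covers repeated zeros, which the paper's sign count, written for simple zeros $0<x_1<\dots<x_{m-r-1}$, does not address directly. Your invariant, the orders $k,j$ of $\mathcal{L}_a(P)$ at $0$ and $-1$ with the forced integer zeros inserted first, is exactly what the iteration needs. Lemma~\ref{L1.1.1} assumes all zeros of $P_n$ lie in $(-\infty,r]$. That hypothesis fails once a non-integer $\beta\in(r,r+1)$ has been inserted, e.g.\ for $x(x-\frac12)(x-1)$. The paper therefore tacitly uses that its proof only needs $r+1$ to be the exact order of vanishing at $0$.

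\emph{The obstacle you flag.} It is already settled by your Step~1 formula. At $c=k$ the bracket equals $x\bigl((x+1)\widetilde Q'+(a+k+j)\widetilde Q\bigr)$, and at $c=-a-j$ it equals $(x+1)\bigl(x\widetilde Q'+(a+k+j)\widetilde Q\bigr)$. One-sided Rolle applied to $(x+1)^{a+k+j}\widetilde Q$, respectively $(-x)^{a+k+j}\widetilde Q$, then puts the remaining zeros in $(-1,0)$. So no Hurwitz limit is needed. A limit would give real-rootedness but not the raised order at $0$ or $-1$ that the next step uses. Also, on $(-1,0)$ write $(-x)^{k-c}$ rather than $x^{k-c}$.
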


Later we will study how close the sufficient conditions in the previous theorem are 
to being necessary.

The next theorem, due to D. G. Wagner, shows that the set of polynomials $P$
such that $\mathcal{L}_1(P)$ has only real zeros lying in the segment $[-1, 0]$
is closed under multiplication.

{\bf Theorem D} (D.G.\,Wagner, \cite{wagner}). { \it  Let 
$P_1, P_2 \in \mathbb{R}[x]$ be real polynomials such that both $\mathcal{L}_1(P_1)$ and 
$\mathcal{L}_1(P_2)$ have only real zeros lying in the segment $[-1, 0].$ Then the
polynomial $\mathcal{L}_1(P_1\cdot P_2)$ also has only real zeros 
lying in the segment $[-1, 0].$ }

\begin{Remark}   It is known that the set of $\mathrm{TP}$-sequences is not closed under
term-by-term multiplication. For further details about term-by-term 
multiplication of totally positive sequences,  see \cite{kavish}.
\end{Remark}

The question of whether an analogous to Theorem~D result holds for all $a>0$ is open.
\begin{Problem}
\label{pr3.1}
 Let $a>0$ and $P_1, P_2 \in \mathbb{R}[x]$ be real polynomials such that both 
 $\mathcal{L}_a(P_1)$ and  $\mathcal{L}_a(P_2)$ have only real zeros lying in the segment 
 $[-1, 0].$ Is it true that the polynomial $\mathcal{L}_a(P_1\cdot P_2)$  has only real zeros 
lying in the segment $[-1, 0]?$ 
\end{Problem}

The following open problems connected with the Problem~\ref{pr2} are also of interest.

\begin{Problem}
\label{pr3}
Given $a>0,$  describe the set of real polynomials $P\in \mathbb{R}[x]$ with  
only real non-positive zeros such that  the polynomials $\mathcal{L}_a(P)$
have only real zeros lying in the segment $[-1, 0].$
\end{Problem}

\begin{Problem}
\label{pr3}
Given $a>0,$  describe the set of real polynomials $P\in \mathbb{R}[x]$ with  
only real non-negative zeros such that  the polynomials $\mathcal{L}_a(P)$
have only real zeros lying in the segment $[-1, 0].$
\end{Problem}

\begin{Problem}
\label{pr3a}
Given $a>0,$ describe the set of real polynomials $P\in \mathbb{R}[x]$ 
such that  the polynomials $\mathcal{L}_a(P)$
have only real negative zeros.
\end{Problem}

\section{Some useful formulas}

1. We start with the classical Newton formulas for finite differences.

Suppose that $P(x)= a_0 + a_1 \frac{x}{a} +a_2 \frac{x(x-1)}{a(a+1)} +a_3 \frac{x(x-1)(x-2)}{a(a+1)(a+2)}
+  \ldots +  a_n \frac{x(x-1)\cdot \ldots \cdot (x-n+1)}{a(a+1)\cdot \ldots \cdot (a+n-1)}.$ We consider
the following linear operator
\begin{equation}
\label{fd}
\Delta:  \mathbb{R}[x] \rightarrow \mathbb{R}[x], \quad (\Delta(P))(x) = P(x+1) -P(x).
\end{equation}
Then the following well-known formulas hold (and can be easily checked)
\begin{eqnarray}
\label{fd1}
&   a_0 = P(0); \\ \nonumber &
a_k =\frac{(a)_k}{k!} \Delta^k(P)(0) = \frac{(a)_k}{k!} \sum_{j=0}^k (-1)^j   C_k^j P(k-j),  k=1, 2, \ldots, n.
\end{eqnarray}

2.  For every $n \in \mathbb{N},$ the following identity holds 
\begin{eqnarray}
\label{f6}
& \sum_{k=0}^n C_n^k \   \frac{x(x-1)\cdot \ldots \cdot (x-k+1)}{(a)_k}=
\\ \nonumber & \frac{(x+a)(x+a+1) \cdot 
\ldots \cdot (x+a +n -1)}{(a)_n}
\end{eqnarray}
and can be easily proved by induction.

3. Let $P(x)= a_0 + a_1 \frac{x}{a} +a_2 \frac{x(x-1)}{a(a+1)} +  \ldots +  
a_n \frac{x(x-1)\cdot \ldots \cdot (x-n+1)}{a(a+1)\cdot \ldots \cdot (a+n-1)},$ 
and $Q(x)= \mathcal{L}_a(P)(x) = a_0 +a_1 x +a_2 x^2 + \ldots + a_n x^n.$ We have
$$
  Q(-(x+1)) = \sum_{k=0}^n (-1)^k a_k \sum_{j=0}^k C_k^j x^j =: \tilde{Q} (x), 
$$
thus
$$ \mathcal{L}_a^{-1}(\tilde{Q} )(x) = \sum_{k=0}^n (-1)^k a_k \sum_{j=0}^k C_k^j  
\frac{x(x-1)\cdot \ldots \cdot (x-j+1)}{(a)_j}.  $$
Using (\ref{f6}), we get
$$\sum_{k=0}^n  (-1)^k a_k \sum_{j=0}^k C_k^j  \frac{x(x-1)\cdot \ldots \cdot (x-j+1)}{(a)_j} = $$
$$ \sum_{k=0}^n (-1)^k  \frac{a_k}{(a)_k} (x+a)(x+a+1)\cdot \ldots \cdot (x +a+k-1 ) = $$
$$   \sum_{k=0}^n   \frac{a_k}{(a)_k} (-x-a)(-x-a-1)\cdot \ldots \cdot (-x-a -k+1 ) = P(-x-a).$$
Finally, we obtain
\begin{equation}
\label{fd2}
\mathcal{L}_a(P(-x-a)) (x) = Q(-x-1) = \mathcal{L}_a(P)(-x-1).
\end{equation}
Note that if $Q$ has only real zeros lying in the segment $[-1, 0],$ 
then $Q(-x-1)$ also  has only real zeros lying in the segment $[-1, 0].$
Thus, we have proved the following statement.
 
\begin{Statement}
\label{St0}
Let $a>0,$  $P$ be a real polynomial,  
and $\widetilde{P}(x) =P(-x -a).$  If $\mathcal{L}_a(P)$ 
has only real zeros lying in the segment $[-1, 0], $ then $\mathcal{L}_a(\widetilde{P})$ 
 also has only real zeros lying in the segment $[-1, 0]. $
\end{Statement}

\section{Necessary conditions}

Suppose that a polynomial 
$$ P(x)= a_0 + a_1 \frac{x}{a} +a_2 \frac{x(x-1)}{a(a+1)} +  \ldots +  
a_n \frac{x(x-1)\cdot \ldots \cdot (x-n+1)}{a(a+1)\cdot \ldots \cdot (a+n-1)}$$
is such that
$$\mathcal{L}_a(P)
= a_0 +a_1 x +a_2 x^2 + \ldots + a_n x^n$$ 
has only real zeros lying in the segment $[-1, 0].$ We have already noted the necessary 
condition that all coefficients of $\mathcal{L}_a(P)$ have the same sign. 

The following statement gives some further simple necessary conditions.

\begin{Statement}
\label{st1}
Suppose that a real polynomial 
$$P(x)= a_0 + a_1 \frac{x}{a} +a_2 \frac{x(x-1)}{a(a+1)} +  \ldots +  
a_n \frac{x(x-1)\cdot \ldots \cdot (x-n+1)}{a(a+1)\cdot \ldots \cdot (a+n-1)},$$
$a_n>0, $   is such that $\mathcal{L}_a(P)$ 
has only real zeros in the segment $[-1, 0]. $ Then

1. All real zeros of $P$ lie in the segment $[- (n-1 +a), n-1].$

2. For every $j=0, 1, 2, \ldots, n-1,$ we have $\frac{a_j}{a_n} \leq C_n^j.$

3. For every $j= 1, 2, \ldots, n-1$ such that $a_{j-1}>0,$  we have
$$\frac{a_j^2}{a_{j-1}a_{j+1}} \geq \frac{j+1}{j} \cdot \frac{n-j+1}{n-j}$$
(Newton inequalities).

\end{Statement}

\begin{proof}

The statement that all non-negative zeros of $P$ are in the segment $[0, n-1]$
follows from the fact that all coefficients of $P$ have the same sign.
The statement that all non-positive zeros of $P$ are in the segment $[- (n-1 +a), 0]$
follows from the previous statement and formula~(\ref{fd2}). Statements~2 and 3
are well-known necessary conditions for a polynomial to have all real zeros
in the segment  $[-1, 0].$
\end{proof}

It is interesting to compare the following necessary conditions with Theorem~\ref{Th1.0}.

\begin{Theorem}
\label{th1}
Suppose that a real polynomial 
$$P(x)= a_0 + a_1 \frac{x}{a} +a_2 \frac{x(x-1)}{a(a+1)} +  \ldots +  
a_n \frac{x(x-1)\cdot \ldots \cdot (x-n+1)}{a(a+1)\cdot \ldots \cdot (a+n-1)},$$
$a_n>0, $   is such that $Q =\mathcal{L}_a(P)$ 
has only real zeros in the segment $[-1, 0]. $  Denote by $\widetilde{Q}(x)= Q(-x).$   
Then

1. If there exists $k=1, 2, \ldots, n-1$ such that $P(k)=0,$ then
$P(0) = P(1) = \ldots = P(k-1)=0.$

2.  For every $m\in \mathbb{N}$ we have
\begin{equation}
\label{f110}
P(-a- m) = \frac{1}{(a)_m x^{a-1}}  \left(x^{a+ m-1} \widetilde{Q}(x)\right)^{(m)}|_{x=1}.
\end{equation}

3.  If there exists $l \in \mathbb{N}$ such that $P(-a -l)=0,$ then
$P(-a) = P(-a-1) = \ldots = P(-a -(l-1))=0.$

4. If $\deg P = 2l+1, l \in \mathbb{N}\cup \{0\},$  then 
$P$ has a root  in the segment $[-a, 0].$

\end{Theorem}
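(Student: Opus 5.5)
The plan is to express the values of $P$ at integers, and at the points $-a-m$, as integrals or derivative functionals of $Q$. We then use the fact that $Q$ has all its zeros in $[-1,0]$ together with positive coefficients (Remark~\ref{r1}).

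\emph{Part 1.} By (\ref{fd1}) and (\ref{f3}), the generating function $\sum_k \frac{(a)_k}{k!}P(k)z^k$ equals $S_{n,a}(z)(1-z)^{-a-n}$, where $S_{n,a}$ has only real non-positive zeros and nonnegative coefficients. Moreover, $S_{n,a}(z)=\sum_{j\ge l} a_j z^j(1-z)^{n-j}$, with $a_j=0$ for $j<l$ and $a_j>0$ for $j\ge l$ (Remark~\ref{r1}). Hence $S_{n,a}(z)=z^l T(z)$ with $T(0)=a_l>0$ and $T$ having negative zeros only. The product $T(z)(1-z)^{-a-n}$ is a product of factors $(1+\alpha z)$ and $(1-z)^{-a-n}$, whose Taylor coefficients are all strictly positive, so the coefficients of $T(z)(1-z)^{-a-n}$ are strictly positive. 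Therefore $\frac{(a)_k}{k!}P(k)=0$ exactly for $k<l$ and is positive for $k\ge l$. If $P(k)=0$, then $k<l$, so $P(0)=\dots=P(k-1)=0$.

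\emph{Part 2.} We use linearity. It suffices to verify (\ref{f110}) for $Q(x)=x^j$, i.e.\ $P(x)=\frac{x(x-1)\cdots(x-j+1)}{(a)_j}$, so that $\widetilde Q(x)=(-1)^jx^j$. The right-hand side is then
\[
\frac{(-1)^j}{(a)_m}\,(a+m+j-1)(a+m+j-2)\cdots(a+j)=\frac{(-1)^j(a+j)_m}{(a)_m}.
\]
The left-hand side is
\[
P(-a-m)=\frac{(-a-m)(-a-m-1)\cdots(-a-m-j+1)}{(a)_j}=\frac{(-1)^j(a+m)_j}{(a)_j}.
\]
The identity $(a)_m(a+m)_j=(a)_{m+j}=(a)_j(a+j)_m$ closes this step.

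\emph{Part 3.} The intended route is Statement~\ref{St0}: by (\ref{fd2}), $\widetilde P(x)=P(-x-a)$ satisfies $\mathcal L_a(\widetilde P)(x)=Q(-x-1)$, which again has zeros in $[-1,0]$. Its leading coefficient is $(-1)^n a_n$, so after multiplying by $(-1)^n$ it satisfies the hypotheses. Applying Part 1 to $\pm\widetilde P$ at the integer $l$ gives $\widetilde P(0)=\dots=\widetilde P(l-1)=0$, which is exactly the claim. The one gap is that Part 1 as stated covers only $l\le n-1$. For $l\ge n$, the positivity argument of Part 1 still applies to all $k\ge 0$, so the conclusion holds as well. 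Formula (\ref{f110}) provides an alternative route: $x^{a+m-1}\widetilde Q(x)$ has all zeros in $[0,1]$ besides the origin, so by Rolle's theorem its derivatives have zeros in $(0,1]$, and a zero at $x=1$ of the $m$-th derivative forces $\widetilde Q$ to vanish to high order at $1$. I would mention this only as a cross-check.

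\emph{Part 4.} Take $\deg P=n=2l+1$ odd. By Part 1 applied through generating-function positivity, $P(0)=a_0\ge 0$. By the reflected argument of Part 3, $\widetilde P(0)=P(-a)$ has the sign of $(-1)^n a_n$ up to vanishing, i.e.\ $(-1)^nP(-a)\ge0$, so $P(-a)\le 0$. The intermediate value theorem then gives a root in $[-a,0]$. The main obstacle is carefully tracking signs and the degenerate vanishing cases in the reflection step: one must check that $(-1)^n\widetilde P$ has positive leading $\mathcal L_a$-coefficient, so that the positivity conclusion of Part 1 applies to it.
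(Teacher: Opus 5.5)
Your proof is correct. The genuine departures from the paper are in Parts~1 and~3.

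Parts~2 and~4 are essentially the paper's argument. In Part~2 the paper expands $P(-a-m)=\sum_k(-1)^k a_k\frac{(a+k)_m}{(a)_m}$ directly, which is your check on the basis $x^j$. In Part~4 it reads off $P(0)=Q(0)$ and $P(-a)=Q(-1)$ from (\ref{f8a}) and (\ref{f8}), which is exactly what your reflection gives at $x=0$.

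In Part~1 the paper uses (\ref{f7}), $P(k)=\sum_{j\le k}\frac{k(k-1)\cdots(k-j+1)}{(a)_j}a_j$. This exhibits $P(k)$ as a positive combination of $a_0,\dots,a_k$. Your generating-function argument is equivalent, but the detour through the zeros of $S_{n,a}$ is unnecessary: each term $a_jz^j(1-z)^{-a-j}$ already has nonnegative Taylor coefficients once $a_j\ge 0$.

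In Part~3 the paper works with (\ref{f110}) and a Rolle-type argument. All zeros of $x^{a+l-1}\widetilde Q(x)$ lie in $[0,1]$, so vanishing of its $l$-th derivative at $x=1$ forces $\widetilde Q$ to vanish at $1$ to order at least $l$. Formula (\ref{f110}) with smaller $m$ then gives $P(-a)=\dots=P(-a-(l-1))=0$. You instead apply Part~1 to $(-1)^nP(-x-a)$, using (\ref{fd2}) and the sign normalization of the leading coefficient. Your route is shorter and independent of Part~2. It also sidesteps the multiplicity argument for a function with the non-integer power $x^{a+l-1}$, which the paper states without details and which needs some care near $x=0$.

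The paper's route exhibits directly the link between $P(-a-l)=0$ and the order of the zero of $Q$ at $-1$ (part~2 of Statement~\ref{st2}). Your reflection recovers that link as well, from part~1 of Statement~\ref{st2}.

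Your treatment of $l\ge n$ is fine. That case is in fact vacuous: your positivity argument shows $P(k)>0$ for every integer $k\ge n$ when $a_n>0$.
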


\begin{proof}

1. We have 
\begin{equation}
\label{f8a}
P(0) = a_0  = Q(0) = \widetilde{Q}(0).
\end{equation} 
For all $k=1, 2, \ldots, n-1,$ we obtain
\begin{equation}
\label{f7}
P(k) = \sum_{j=0}^k \frac{k(k-1)\cdot \ldots \cdot(k-j+1)}{a(a+1) \cdot \ldots \cdot (a+j-1)}  a_j.
\end{equation}
If  $P(k)=0,$ then, since all coefficients $a_j$ have the same sign, we obtain
$a_0=a_1= \ldots = a_k =0$, thus $P(0) = P(1) = \ldots = P(k-1)=0.$

2. For  $P(x)= a_0 + a_1 \frac{x}{a} +a_2 \frac{x(x-1)}{a(a+1)} +  \ldots +  
a_n \frac{x(x-1)\cdot \ldots \cdot (x-n+1)}{a(a+1)\cdot \ldots \cdot (a+n-1)},$ 
we have  $Q(x) = \mathcal{L}_a(P) (x) = a_0 +a_1x +a_2 x^2 + \ldots + a_n x^n,$
and $\widetilde{Q}(x) = Q(-x) =  a_0 - a_1x +a_2 x^2 - \ldots +(-1)^n a_n x^n.$
We obtain
\begin{equation}
\label{f8}
P(- a) = a_0 - a_1 + a_2 -a_3 + \ldots + (-1)^n a_n = Q(-1) =  \widetilde{Q}(1).
\end{equation}
\begin{eqnarray}
\label{f9}
&  P(-a -1) = \frac{a}{a} a_0 -  \frac{a+1}{a} a_1 + \frac{a+2}{a} a_2 - \ldots + (-1)^n \frac{a+n}{a} a_n =
\\ \nonumber & \frac{1}{a x^{a-1}} \left(x^a \widetilde{Q}(x)\right)^\prime|_{x=1}.
\end{eqnarray}
Since all zeros of $Q$ are real and lie in the segment $[-1, 0], $ all zeros of $\widetilde{Q}$ are
real and lie in the segment $[0, 1].$
Thus, all zeros of the function $x^a\widetilde{Q}(x)$ are real and lie in the segment $[0, 1].$ Suppose 
that $P(-a-1)=0.$  This means that  $(x^a \widetilde{Q}(x))^\prime|_{x=1} =0,$  whence  $1$ is a root 
of $\widetilde{Q}$ of multiplicity $\geq 2.$ In particular, $\widetilde{Q}(1) = Q(-1) = P(-a) =0.$

For every $m\in \mathbb{N}$ we get
$$
P(-a- m) =\sum_{k=0}^n (-1)^k a_k \frac{(a+k)(a+k+1) \cdot \ldots 
\cdot (a+k+m-1)}{a(a+1) \cdot \ldots \cdot (a+m-1)} = $$
$$\frac{1}{a(a+1) \cdot \ldots \cdot (a+m-1) x^{a-1}}  \left(x^{a+ m-1} \widetilde{Q}(x)\right)^{(m)}|_{x=1}.
$$

3.  Since  all zeros of $Q$ are real and lie in the segment $[-1, 0],$   all zeros of the function  
$x^{a+ l-1} \widetilde{Q}(x)$ are real and lie in the segment $[0, 1].$ Suppose 
that $P(- a - l)=0.$  This means that  $\left(x^{a+l-1} \widetilde{Q}(x)\right)^{(l)}|_{x=1} =0,$  
whence  $1$ is a root of $\widetilde{Q}$ of multiplicity $\geq l.$ Using (\ref{f110}), we obtain that  
$P(-a) = P(-a-1) = \ldots = P(-a -(l-1))=0.$

4. Let  $\deg P = \deg Q = 2l+1, l \in \mathbb{N}\cup \{0\}. $ Since 
all zeros of $Q$  are real and in the segment $[-1, 0],$  we have $Q(0)\cdot Q(-1) \leq 0.$
By (\ref{f8a}) and (\ref{f8}) we get $P(0)\cdot P(-a) \leq 0.$ Therefore, $P$ has a root in 
the  segment $[-a, 0].$
\end{proof}

In fact, we have proved the following statement concerning the possible zeros of a polynomial 
$Q$ at the endpoints of a segment $[-1, 0].$

\begin{Statement}
\label{st2}
Suppose that a real polynomial 
$$P(x)= a_0 + a_1 \frac{x}{a} +a_2 \frac{x(x-1)}{a(a+1)} +  \ldots +  
a_n \frac{x(x-1)\cdot \ldots \cdot (x-n+1)}{a(a+1)\cdot \ldots \cdot (a+n-1)},$$
$a_n>0, $   is such that $Q =\mathcal{L}_a(P)$ 
has only real zeros in the segment $[-1, 0]. $ 

1. The following four statements are equivalent:

1a. The polynomial $Q$ has a zero of
multiplicity $m\in \mathbb{N}$ at the point $x=0.$ 

1b. $a_0 = a_1 = \ldots =a_{m-1}=0.$

1c. $P(m-1)=0.$

1d. $P(0) = P(1) = \ldots = P(m-1)=0.$

2. The following three statements are equivalent.

2a. The polynomial $Q$ has a zero of
multiplicity $m\in \mathbb{N}$ at the point $x=-1.$

2b. $P(-(a + m-1))=0.$

1c. $P(-(a + m-1)) = P(-(a + m-2)) = \ldots = P(-a)=0.$

\end{Statement}

\section{Proof of Theorem~\ref{Th1.0}}

To prove Theorem~\ref{Th1.0}, we need the following lemma.

\begin{Lemma}
\label{L1.1.1}
Let $a>0$ and $P_n\in \mathbb{R}[x]$ be a real polynomial whose zeros are all real, 
with  $\deg P_n =n,   n\in \mathbb{N}.$ Let
\begin{equation}
\label{f9.1}
\sum_{k=0}^\infty \frac{(a)_k}{k!} P_n(k) x^k = \frac{S_m(x)}{(1-x)^{n+a}},
\end{equation}
where $S_m\in \mathbb{R}[x]$ is a real polynomial, $\deg S_m =m \leq n, m\in 
\mathbb{N}\cup \{0\},$ and all the zeros of $S_m$ are real and non-positive.
Suppose that $ r\in \mathbb{N}\cup \{0\}$ is such an integer that all the zeros of
$P_n$ lie in the interval $(- \infty, r]$ and $P_n(0)=P_n(1) = \ldots = P_n(r)=0.$
Then for every $\beta$ such that $ r < \beta \leq r+1,$ we have
\begin{equation}
\label{f9.2}
\sum_{k=0}^\infty \frac{(a)_k}{k!} P_n(k) (k-\beta) x^k = \frac{S_{m+1}(x)}{(1-x)^{n+1+a}},
\end{equation}
where $S_{m+1}\in \mathbb{R}[x]$ is a real polynomial, $\deg S_m =m+1, $ and all the 
zeros of $S_{m+1}$ are also real and non-positive.

\end{Lemma}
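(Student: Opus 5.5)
The plan is to translate multiplication of the coefficients by $(k-\beta)$ into a differential operator acting on the generating function. Then I will show, by a Rolle-type argument on the negative half-axis, that the resulting numerator has all its zeros real and non-positive.

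\emph{Step 1 (the formula for $S_{m+1}$).} Put $F(x)=S_m(x)(1-x)^{-(n+a)}$. The left-hand side of (\ref{f9.2}) is $xF'(x)-\beta F(x)$. A direct computation gives
$$xF'-\beta F=\frac{x(1-x)S_m'(x)+\bigl((n+a+\beta)x-\beta\bigr)S_m(x)}{(1-x)^{n+a+1}},$$
so
$$S_{m+1}(x)=x(1-x)S_m'(x)+\bigl((n+a+\beta)x-\beta\bigr)S_m(x).$$
The coefficient of $x^{m+1}$ is $(n+a+\beta-m)$ times the leading coefficient of $S_m$. This is nonzero because $m\le n$, $a>0$ and $\beta>0$, so $\deg S_{m+1}=m+1$.

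\emph{Step 2 (the zero of $S_m$ at the origin).} Since $P_n(0)=\dots=P_n(r)=0$, formula (\ref{f7}) together with (\ref{f8a}) is a triangular system with nonzero diagonal. It forces $a_0=\dots=a_r=0$, so by (\ref{ff1}) $S_m(x)=x^{r+1}T(x)$, where $T$ has only real negative zeros and $\deg T=m-r-1$. Substituting into the formula from Step 1 gives
$$S_{m+1}(x)=x^{r+1}\Bigl[(r+1-\beta)T(x)+x\,R(x)\Bigr]$$
for some polynomial $R$. Thus $S_{m+1}$ has a zero of order exactly $r+1$ at $0$ when $\beta<r+1$, and of order at least $r+2$ when $\beta=r+1$.

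\emph{Step 3 (Rolle on $(-\infty,0)$).} On $(-\infty,0)$ consider
$$H(x)=|x|^{r+1-\beta}(1-x)^{-(n+a)}T(x).$$
Up to a nonvanishing factor, $H'(x)$ equals $S_{m+1}(x)\,x^{-(r+2)}|x|^{r+1-\beta}(1-x)^{-(n+a)-1}$. Hence the zeros of $H'$ on $(-\infty,0)$ are exactly the negative zeros of $S_{m+1}$, with the same multiplicities.

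Let $T$ have $s$ distinct zeros with total multiplicity $m-r-1$. A zero of $T$ of multiplicity $\mu$ is a zero of $H'$ of multiplicity $\mu-1$. Rolle's theorem gives one further zero of $H'$ strictly between consecutive distinct zeros of $T$, which is $s-1$ zeros.

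There are two more intervals to examine. As $x\to-\infty$, $H(x)$ behaves like $|x|^{m-\beta-n-a}\to0$, since $m\le n$ and $\beta,a>0$. This yields a zero of $H'$ to the left of the smallest zero of $T$. If $\beta<r+1$, then also $H(x)\to0$ as $x\to0^-$, which yields a zero of $H'$ between the largest zero of $T$ and $0$.

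\emph{Step 4 (counting the zeros).} For $\beta<r+1$, the zeros found so far are:
\begin{itemize}
\item $r+1$ at the origin;
\item $(m-r-1-s)$ from the multiple zeros of $T$;
\item $(s-1)$ from the gaps between consecutive distinct zeros;
\item $2$ from the two end intervals.
\end{itemize}
The total is $m+1$ real non-positive zeros.

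For $\beta=r+1$, the interval next to the origin is lost, but the origin gains multiplicity. The total is again at least $(r+2)+(m-r-1-s)+(s-1)+1=m+1$.

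If $T$ is constant ($s=0$), only the zero near $-\infty$ and the zero(s) at the origin occur, and the same count applies. Since $\deg S_{m+1}=m+1$, these are all the zeros, and they are real and non-positive.

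The main obstacle is this bookkeeping: checking that the endpoint limits of $H$ vanish in exactly the right cases, and that the boundary case $\beta=r+1$ exchanges one Rolle zero for extra multiplicity at the origin. This is where the hypotheses $r<\beta\le r+1$ and $m\le n$ are used.
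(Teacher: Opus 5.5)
Your proof is correct. It follows the paper's outline but uses a different tool to locate the zeros. Both arguments start from the same identity $S_{m+1}=x(1-x)S_m'+\bigl((n+a+\beta)x-\beta\bigr)S_m$ and the same factorization $S_m=x^{r+1}T$.

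\textbf{The paper's route.} It stays with polynomials and uses the intermediate value theorem. At each negative zero $-x_j$ of $S_m$ it has $S_{m+1}(-x_j)=-x_j(1+x_j)S_m'(-x_j)$, and these values alternate in sign. It reads the sign at $-\infty$ from the top coefficient $b_m(n+a+\beta-m)$ and the sign near $0^-$ from the bottom coefficient $b_{r+1}(r+1-\beta)$. This gives a root in each of the $m-r$ intervals.

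\textbf{Your route.} You recognize $S_{m+1}/x^{r+1}$ as a nonvanishing multiple of $H'$ and apply Rolle's theorem. In effect this is the identity $xF'-\beta F=x^{\beta+1}\bigl(x^{-\beta}F\bigr)'$, moved to the negative axis via $|x|$.

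\textbf{What yours buys.} First, your count handles multiple negative zeros of $S_m$ automatically. The paper assumes these zeros are simple ``without loss of generality''. At a multiple zero its evaluation gives $S_{m+1}(-x_j)=0$ and the sign argument stalls, so it would need a perturbation step it does not supply. Second, you treat the endpoint $\beta=r+1$ correctly. There the paper's assertion $r+1-\beta>0$ fails, and one must give up the interval $(-x_1,0)$ in exchange for the extra factor $x^{r+2}$, exactly as your Step 4 does.

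\textbf{What the paper's buys.} Everything is read off two coefficients and the values of $S_m'$, with no auxiliary fractional powers.

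\textbf{One point to make explicit.} You assert without justification that $T$ has only negative zeros, i.e.\ $T(0)\neq 0$. This holds because all zeros of $P_n$ lie in $(-\infty,r]$, so $P_n(r+1)\neq 0$. Then (\ref{f7}) at $k=r+1$, together with $a_0=\dots=a_r=0$, gives $a_{r+1}\neq 0$, and $T(0)=a_{r+1}$. Your ``order exactly $r+1$'' claim and your multiplicity bookkeeping on $(-\infty,0)$ rely on this.
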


\begin{proof}
Using (\ref{f9.1}), we obtain 
$$\sum_{k=0}^\infty \frac{(a)_k}{k!} P_n(k) (k-\beta) x^k = x \left( \frac{S_m(x)}{(1-x)^{n+a}}\right)^\prime (x) 
-\beta  \frac{S_m(x)}{(1-x)^{n+a}} =$$
$$  \frac{x(1-x) S_m^\prime (x) + ((n+a+\beta)x - \beta) S_m(x)}{(1-x)^{n+1+a}},  $$
whence
\begin{equation}
\label{f9.3}
S_{m+1} (x) = x(1-x) S_m^\prime (x) + ((n+a+\beta)x - \beta) S_m(x).
\end{equation}
By our assumptions, $P_n(0)=P_n(1) = \ldots = P_n(r)=0, $ and  $P_n (r+1) \ne 0.$  Using
Statement~\ref{st2}, we obtain for  $Q_m =\mathcal{L}_a(P_n)$ that
$x^{r+1} \mid Q_m$ and $x^{r+2} \nmid Q_m.$ By (\ref{ff1}), $S_{m}(x) =(1-x)^n Q_{m}\left(\frac{x}{1-x}\right),$
hence $x^{r+1} \mid S_m,\   x^{r+2} \nmid S_m,$ which means that
$$ S_m(x) = x^{r+1} (x+x_1)(x+x_2) \cdot \ldots \cdot (x+x_{m-r-1}), x_j > 0, 
1\leq j \leq m-r-1.   $$
Without loss of generality, we assume that $0< x_1 < x_2 < \ldots < x_{m-r-1}.$

If $P_{n+1} = P_n(x) (x- \beta),$ then $P_{n+1}(0)=P_{n+1}(1) = \ldots = P_{n+1}(r)=0, $
whence $x^{r+1} \mid S_{m+1}.$

We write
$$ S_m(x) = b_m x^m +b_{m-1} x^{m-1} + \ldots + b_{r+1}x^{r+1}, $$
where, without loss of generality, $b_m >0.$ Since all the zeros of $S_m$ 
are real and non-positive, all coefficients of $S_m$ have the same sign, so
$ b_{r+1}>0.$ So by (\ref{f9.3}) we have
\begin{eqnarray}
\label{f9.4}
&S_{m+1}(x)= x(1-x)(m b_m x^{m-1} + \ldots + (r+1) b_{r+1} x^r)  +
\\ \nonumber &  ((n+a+\beta)x - \beta) (b_m x^m + \ldots + b_{r+1}x^{r+1})  =   
\\  \nonumber &   b_m(n+a+ \beta -m) x^{m+1}  + \ldots + b_{r+1} (r+1 - \beta)x^{r+1}.  
\end{eqnarray}
We note that $n+a+ \beta -m >0$ and $ r+1 - \beta >0.$

1. Suppose that $m\in 2\mathbb{N} \cup \{0\}.$ Then $\sgn S_m (-\infty) = +1,\     
\sgn S_{m+1} (-\infty) = -1,\    $ and
$$\sgn S_m^\prime (-x_{m-r-1}) = -1,    \sgn S_m^\prime (-x_{m-r-2}) = +1, 
\ldots , $$
$$\sgn S_m^\prime (-x_{1}) = (-1)^{m-r-1}= (-1)^{r+1}.$$
From (\ref{f9.3}) we obtain
$$\sgn S_{m+1} (-x_{m-r-1})  =\sgn \left(-x_{m-r-1} (1+x_{m-r-1})S_m^\prime (-x_{m-r-1}) \right) = +1; $$
$$\sgn S_{m+1} (-x_{m-r-2})  =\sgn \left(-x_{m-r-2} (1+x_{m-r-2})S_m^\prime (-x_{m-r-2}) \right) = -1; $$
$$\sgn S_{m+1} (-x_{m-r-3})  =\sgn \left(-x_{m-r-3} (1+x_{m-r-3})S_m^\prime (-x_{m-r-3}) \right) = +1; $$
$$\vdots $$
$$ \sgn S_{m+1} (-x_{1})  =\sgn \left(-x_{1} (1+x_{1})S_m^\prime (-x_{1}) \right) = (-1)^r. $$
Moreover, by (\ref{f9.4})  we have $ \sgn S_{m+1} (-x)  =(-1)^{r+1}$ for $x$ being   negative 
and sufficiently close to $ 0.$

We have proved that on each of the intervals 
$$(-\infty, -x_{m-r-1}), (-x_{m-r-1}, -x_{m-r-2}), \ldots,   (-x_2, -x_1), (-x_1, 0)$$   
there is a root of the polynomial $S_{m+1}.$ Hence, $S_{m+1}$ has at least $m-r$ negative roots 
and a zero of multiplicity at least $r+1$ at the point $0.$ Thus, all $m+1$ roots of the polynomial $S_{m+1}$ are real
and non-positive.

2. Suppose now that $m -1 \in 2\mathbb{N} \cup \{0\}.$ Then $\sgn S_m (-\infty) = -1,  
\sgn S_{m+1} (-\infty) = +1, $ and
$$\sgn S_m^\prime (-x_{m-r-1}) = +1,    \sgn S_m^\prime (-x_{m-r-2}) = -1, 
\ldots , $$
$$\sgn S_m^\prime (-x_{1}) = (-1)^{m-r }= (-1)^{r-1}.$$
From (\ref{f9.3}) we obtain
$$\sgn S_{m+1} (-x_{m-r-1})  =\sgn \left(-x_{m-r-1} (1+x_{m-r-1})S_m^\prime (-x_{m-r-1}) \right) = -1; $$
$$\sgn S_{m+1} (-x_{m-r-2})  =\sgn \left(-x_{m-r-2} (1+x_{m-r-2})S_m^\prime (-x_{m-r-2}) \right) = +1; $$
$$\sgn S_{m+1} (-x_{m-r-3})  =\sgn \left(-x_{m-r-3} (1+x_{m-r-3})S_m^\prime (-x_{m-r-3}) \right) = -1; $$
$$\vdots $$
$$ \sgn S_{m+1} (-x_{1})  =\sgn \left(-x_{1} (1+x_{1})S_m^\prime (-x_{1}) \right) = (-1)^r. $$
Besides that, by (\ref{f9.4})  we have $ \sgn S_{m+1} (-x)  =(-1)^{r+1}$ for $x$ being   negative 
and sufficiently close to $ 0.$

We have proved that on each of the intervals 
$$(-\infty, -x_{m-r-1}), (-x_{m-r-1}, -x_{m-r-2}), \ldots,   (-x_2, -x_1), (-x_1, 0)$$   
there is a root of the polynomial $S_{m+1}.$ Hence, $S_{m+1}$ has at least $m-r$ 
negative roots and  a zero of multiplicity at least $r+1$ at the point $0.$ Thus, all $m+1$ 
roots of the polynomial $S_{m+1}$ are real
and non-positive.

Lemma~\ref{L1.1.1}  is proved. 
\end{proof}

Using the previous lemma, we can easily prove Theorem~\ref{Th1.0}. Let
$a>0,$ and $P\in \mathbb{R}[x]$ be a polynomial with only real zeros. Denote by 
$\lambda(P), \Lambda(P)$ the smallest and the largest zeros of $P,$  respectively. Suppose 
that $P(x)=0$ for all $x\in \left(\left[\lambda(P), -a\right] \cap \{ -a, -a-1, -a-2, \ldots  \}\right)
\cup \left(\left[0, \Lambda(P)\right] \cap \mathbb{Z}\right).$  Let $x_1 \leq x_2 \leq \ldots 
\leq x_n$ be the zeros of $P.$

If $x_n \leq 0,$ then, by Theorem~C, the polynomial $\mathcal{L}_a(P)$ 
has only real zeros lying in the segment $[-1, 0].$ Now suppose that 
$x_1 \leq x_2 \leq \ldots \leq x_l =0 < x_{l+1} \leq x_{l+2} \leq \ldots 
\leq x_n.$ Let $P_l (x) =(x-x_1)(x-x_2) \cdot \ldots \cdot (x-x_l).$
By Theorem~C, we conclude that the polynomial  $Q_l :=\mathcal{L}_a(P_l)$ 
has only real zeros lying in the segment $[-1, 0].$ This is equivalent to the
statement that
$$\sum_{k=0}^\infty \frac{(a)_k}{k!} P_l(k) x^k = \frac{S_t(x)}{(1-x)^{a+l}}, $$
where $S_t$ is a real polynomial, $\deg S_t = t \leq l,$ and all zeros of  $S_t$
are real and non-positive. Under the assumptions of Theorem~\ref{Th1.0}, after 
recursively applying  Lemma~\ref{L1.1.1} with $\beta =  x_{l+1},  x_{l+2},  \ldots,  x_n, $ 
we obtain
$$\sum_{k=0}^\infty \frac{(a)_k}{k!} P(k) x^k = \frac{S_q(x)}{(1-x)^{a+n}}, $$
where $S_q$ is a real polynomial, $\deg S_q = q \leq n=\deg P,$ and all zeros of  $S_q$
are real and non-positive. This is equivalent to the fact that the polynomial $\mathcal{L}_a(P)$ 
has only real zeros lying in the segment $[-1, 0].$ 

Theorem~\ref{Th1.0} is proved. $\Box$

\section{Sufficient conditions}

Let  $Q(x) = \sum_{k=0}^n a_k x^k$  be a polynomial with positive coefficients. 
We define the second quotients  $q_k$ as follows:

\begin{equation}
\label{qqq} 
q_k=q_k(Q):=\frac {a_{k-1}^2}{a_{k-2}a_k},\
2 \leq k \leq n.
\end{equation}

The formulas below follow from repeated application of the definition of the second quotients.

\begin{equation}
\label{defq}
 a_k=\frac
{a_1}{q_2^{k-1} q_3^{k-2} \ldots q_{k-1}^2 q_k} \left(
\frac{a_1}{a_0} \right) ^{k-1},\    2 \leq k \leq n.
\end{equation}

The second quotients $q_k$ provide useful information about the distribution of zeros 
of the associated polynomial. This connection dates back to the classical work of 
J.I. Hutchinson~\cite{hut}. In 1926, Hutchinson established one of the first general 
coefficient-based criteria for a polynomial (or an entire function)  with positive 
coefficients  to have only real non-positive zeros.  

{\bf Theorem E} (J. I. ~Hutchinson, \cite{hut}). { \it Let $Q(x)=
\sum_{k=0}^n a_k x^k$, $a_k > 0$ for all $k$. 
If
$$q_k(Q)\geq 4 \ \ \mbox{ for all} \ \ 2 \leq k \leq n,$$  
then all zeros of $Q$ are real and negative.}

It is easy to show that, if only a lower bound for $q_k(Q)$ is given, then the constant   
$4$  in $q_k (Q) \geq 4$ is the smallest possible constant for concluding that
$Q$ has only real zeros. However, if we have both a lower and an upper bound for $q_k(Q),$  
then the constant $4$ in the condition $q_k(Q) \geq 4$ can be reduced to 
conclude that all zeros of a polynomial  $Q$  are real and negative, see  \cite{HishAn}.

{\bf Theorem F} (T.H.~ Nguyen, A.~Vishnyakova,  \cite{HishAn}).
{ \it  Let $Q(x) = \sum_{k=0}^n a_k x^k,$ $a_k > 0,$  be a polynomial, and $n \geq 4.$ Suppose that 
there exists $\alpha,  1 + \sqrt{5} \leq \alpha < 4,$ such that
 $q_k(Q) \in \left[\alpha, \frac{8}{\alpha(4 - \alpha)} \right]$ for  $k =2, 3, \ldots, n.$  
Then  all zeros of $Q$ are real and negative. }

The following theorem is a corollary of Theorems~E and~F.

\begin{theorem}
\label{th2}
Let $P$ be the real polynomial 
$$P(x)= a_0 + a_1 \frac{x}{a} +a_2 \frac{x(x-1)}{a(a+1)} +  \ldots +  
a_n \frac{x(x-1)\cdot \ldots \cdot (x-n+1)}{a(a+1)\cdot \ldots \cdot (a+n-1)},$$
$a_j>0 $ for all $j=0, 1, \ldots, n.$

1. Suppose that $P$ satisfies the conditions
$$ \frac {a_{k-1}^2}{a_{k-2}a_k} \geq 4, \    2 \leq k \leq n, $$
and
$$ \frac{a_{n-1}}{a_n} \leq 1. $$
Then $\mathcal{L}_a(P)$  has only real zeros in the segment $[-1, 0]. $

2. Suppose that $n \geq 4$ and there exists $\alpha,  1 + \sqrt{5} \leq \alpha < 4,$ such that
 $\frac {a_{k-1}^2}{a_{k-2}a_k} \in \left[\alpha, \frac{8}{\alpha(4 - \alpha)} \right]$ for all 
 $k =2, 3, \ldots, n,$  and $ \frac{a_{n-1}}{a_n} \leq 1. $  Then $\mathcal{L}_a(P)$ 
has only real zeros in the segment $[-1, 0]. $

\end{theorem}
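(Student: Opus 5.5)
Since $\mathcal{L}_a(P)=Q(x)=\sum_{k=0}^n a_k x^k$ with all $a_k>0$, Theorem~E (in part 1) or Theorem~F (in part 2) applied to $Q$ shows at once that all zeros of $Q$ are real and negative. What is left is to prove that they lie in $[-1,0)$, i.e.\ that no zero is less than $-1$. Our plan is to control the smallest zero via the substitution $x\mapsto -x-1$, or more directly by showing that $Q$ has no sign change on $(-\infty,-1)$.

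The cleanest route, we think, is to compare with the reversed polynomial. Let $x_1,\dots,x_n>0$ be such that the zeros of $Q$ are $-x_1,\dots,-x_n$. Then $\frac{a_{n-1}}{a_n}=\sum_{j=1}^n x_j$, so the hypothesis $\frac{a_{n-1}}{a_n}\le 1$ gives $\sum_j x_j\le 1$. Because every $x_j>0$, each one satisfies $x_j\le\sum_i x_i\le 1$. Hence every zero of $Q$ lies in $[-1,0)$, and in fact the sum of their absolute values is at most $1$. By the equivalence established in the Introduction (formula (\ref{ff1}) and the discussion of Open problem~\ref{pr2}), this is exactly the claim.

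Thus the argument has two steps: first invoke Hutchinson (Theorem~E), resp.\ Nguyen--Vishnyakova (Theorem~F), noting that the second quotients of $Q$ are precisely $\frac{a_{k-1}^2}{a_{k-2}a_k}$ and that the coefficient-positivity hypothesis holds, to obtain real negative zeros; then use Vieta's relation $a_{n-1}/a_n=\sum x_j$ to bound each zero. Neither step poses a real obstacle. The only care needed is that positivity of all $a_j$ makes $\deg Q=n$ exactly, so Vieta applies with all $n$ zeros, and that Theorem~F needs $n\ge4$, which part 2 assumes.
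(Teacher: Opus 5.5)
Your proof is correct. Its first step, applying Theorem~E or Theorem~F to $Q=\mathcal{L}_a(P)$, whose second quotients are exactly $a_{k-1}^2/(a_{k-2}a_k)$, is the same as in the paper. The difference lies in how you confine the zeros to $[-1,0]$.

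The paper argues by domination. The condition $q_k>1$ makes the ratios $a_{k-1}/a_k$ strictly increasing. Hence for $x\le -a_{n-1}/a_n$ the moduli $a_k|x|^k$ increase with $k$. Grouping the alternating terms of $(-1)^nQ(x)$ in consecutive pairs then shows $(-1)^nQ(x)>0$ there. So no zero lies in $(-\infty,-a_{n-1}/a_n]\supseteq(-\infty,-1]$.

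You instead use Vieta. Once the zeros $-x_1,\dots,-x_n$ are known to be negative, $\max_j x_j\le\sum_j x_j=a_{n-1}/a_n\le 1$.

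Both arguments give the same threshold $a_{n-1}/a_n$. Yours is shorter and gives the stronger statement $\sum_j |x_j|\le 1$. It needs the first step's conclusion that all zeros are real and negative. The paper's domination step excludes real zeros below $-a_{n-1}/a_n$ from the ratio monotonicity alone. Since real-rootedness is established first anyway, your route loses nothing.

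One small point: your phrase ``compare with the reversed polynomial'' is a misnomer. What you actually use is the coefficient identity $a_{n-1}=a_n\sum_j x_j$.
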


\begin{proof}
Theorems E and F show that, under the assumptions of Theorem~\ref{th2}, 
all zeros of  $Q(x) = \mathcal{L}_a(P) (x) = a_0 +a_1x +a_2 x^2 + \ldots + 
a_n x^n$ are real and negative.  It remains to show that all these zeros lie in the 
segment  $[-1, 0].$ Under the assumptions of Theorem~\ref{th2} 
we have 
$$0 < \frac{a_0}{a_1} < \frac{a_1}{a_2} <  \ldots < \frac{a_{n-2}}{a_{n-1}} < \frac{a_{n-1}}{a_n}.  $$
Therefore, for every $x\in  (-\infty, - \frac{a_{n-1}}{a_n}]$ we have
$$ a_0 < a_1 |x| < a_2 |x|^2 < \ldots < a_{n-1}|x|^{n-1} < a_n |x|^n.  $$
So, for all $x\in  (-\infty, - \frac{a_{n-1}}{a_n}]$ we obtain
$$(-1)^n Q(x) = (a_n |x|^n - a_{n-1}|x|^{n-1}) + (a_n |x|^{n-2} - a_{n-1}|x|^{n-3}) +\ldots >0.  $$
Hence, $Q$ has no zeros  in $ (-\infty, - \frac{a_{n-1}}{a_n}],$ and since $ \frac{a_{n-1}}{a_n} \leq 1$
all zeros of $Q$ are in the segment $[-1, 0].$ 
\end{proof}

\section{Some examples}

We  start with the following trivial example.

\begin{Example}
\label{ex1} For a polynomial $P$ of degree $1,$ we have $\mathcal{L}_a(P)(x) = P(ax),$
so the polynomial $\mathcal{L}_a(P)$ has  one real zero lying in the segment $[-1, 0]$ if
and only if the polynomial $P$ has  one real zero lying in the segment $[-a, 0].$
\end{Example}

The following examples demonstrate that sufficient conditions in Theorem~\ref{Th1.0}
are not necessary.
\begin{Example}
\label{ex2.1} 
  Let 
$$P(x)= \left(x+ \frac{a+\sqrt{a(a+1)} }{2}\right)^2 = a(a+1) \frac{x(x-1)}{a(a+1)}+$$
$$ a(a+1 +\sqrt{a(a+1)})\frac{x}{a} +  
 \left(\frac{a+\sqrt{a(a+1)} }{2}\right)^2, $$
then
$$ Q(x)= \mathcal{L}_a(P)(x) = a(a+1)x^2 +  a(a+1 +\sqrt{a(a+1)})x + $$
$$\left(\frac{a+\sqrt{a(a+1)} }{2}\right)^2= \left(\sqrt{a(a+1)}x +  \frac{a+\sqrt{a(a+1)} }{2}  \right)^2. $$
The polynomial $ Q$  has  only real zeros located in the 
segment $[-1, 0].$   We see that the polynomial  $P$ has  real negative zeros with modulus greater than $a$, 
but $P(-a) \ne 0.$ 

Let
$$P(x)= \left(x+ \frac{a-\sqrt{a(a+1)} }{2}\right)^2 = a(a+1) \frac{x(x-1)}{a(a+1)}+$$
$$ a(a+1 -\sqrt{a(a+1)})\frac{x}{a} +  
 \left(\frac{a-\sqrt{a(a+1)} }{2}\right)^2, $$

then
$$ Q(x)= \mathcal{L}_a(P)(x) = a(a+1)x^2 +  a(a+1 -\sqrt{a(a+1)})x + $$
$$\left(\frac{a -\sqrt{a(a+1)} }{2}\right)^2= \left(\sqrt{a(a+1)}x -  \frac{a -\sqrt{a(a+1)} }{2}  \right)^2. $$
The polynomial $ Q$  has  only real  zeros located in the segment $[-1, 0].$
We see that the polynomial  $P$ has positive zeros,  but $P(0) \ne 0.$ 
\end{Example}

\begin{Example}
\label{ex2a}  
Let $P(x)= x^2 + \frac{a}{4(a+1)} = a(a+1)\cdot \frac{x(x-1)}{a(a+1)}+ a \frac{x}{a} + \frac{a}{4(a+1)},$
then
$$ Q(x)= \mathcal{L}_a(P)(x) = a(a+1)x^2 + ax + \frac{a}{4(a+1)}=\frac{a}{4(a+1)}\left(2(a+1)x+1\right)^2. $$
The polynomial $ Q$  has only real zeros lying in the segment $[-1, 0].$
But the polynomial $P$ has all non-real zeros.  
\end{Example}

\begin{Example}
\label{ex2d} 
Let $P_{n, t}(x) =\left(x + t\right)^n =:
\sum_{k=0}^n a_k(t)   \frac{x(x-1)\cdot \ldots \cdot (x-k+1)}{(a)_k}, 
n\in \mathbb{N}.$ Using (\ref{fd1}), we obtain
$$ a_k(t)= \frac{(a)_k}{k!}  \sum_{j=0}^k (-1)^j   C_k^j P_{n, t}(k-j) =\frac{(a)_k}{k!}  \sum_{j=0}^k (-1)^j   C_k^j 
(k-j+t)^n,  0\leq k \leq n.
 $$
 Then
 $$ Q_{n, t}(x)= \mathcal{L}_a(P_{n, t})(x) = \sum_{k=0}^n \frac{(a)_k}{k!} x^k  \sum_{j=0}^k (-1)^j   C_k^j 
(k-j+t)^n.$$

For a given $n\in \mathbb{N}$ we want to find the set of values of $t$ such that 
all zeros of $ Q_{n, t}$ are real and lie in the segment $[-1, 0].$ We denote this set by 
$$M_n(a) = \{ t\in \mathbb{R} | \  \mbox{all zeros of}\    \mathcal{L}_a(P_{n, t})  \   
\mbox{are real and lie  in the segment}\   [-1, 0]  \}.$$
By (\ref{fd2}), if $\mathcal{L}_a(P_{n,t})$ 
has only real zeros in the segment $[-1, 0], $ then $\mathcal{L}_a(\widetilde{P}_{n,t})$ 
 has only real zeros in the segment $[-1, 0],  $ where $\widetilde{P}_{n,t}(x) =P_{n,t}(-x -a).$
Thus, $t\in M_n(a) \Rightarrow (a-t)\in M_n(a),$
so it is sufficient to consider the case $t \geq \frac{a}{2}.$

Further we will assume that $t\geq0.$

The answer is simple for odd  values of $n.$  If $n$  is odd, $Q_{n, t}(0)=P_{n, t}(0)= t^n >0$  and 
all zeros of $ Q_{n, t}$ are real and lie in the segment $[-1, 0], $ then  $Q_{n, t}(-1)\leq 0.$
It is easy to check that  $Q_{n, t}(-1)  = P_{n, t}(-a) =(-a+t)^n \leq 0$  (see (\ref{f8})). So, $t\leq a.$
Hence, $t\in M_n(a),  t\geq 0 \Rightarrow t \in [0, a].$ On the other hand, using Theorem~\ref{Th1.0}
we get $[0, a] \subset M_n(a).$ Whence, for every $k\in \mathbb{N}\cup\{0\}$ we have $M_{2k+1}(a) = [0, a].$

For even $n,$ the situation is much more complicated. We note that, by statement 3 in 
Theorem~\ref{th1},  that for all $k, j\in \mathbb{N}, j \geq 2,$
we have  $-a-j \notin M_{2k}(a).$ Numerical calculations suggest that the following 
conjecture holds.

\begin{Conjecture} For  every $k\in \mathbb{N},$ we have $M_{2k} (a)= [-c_{2k}(a), a+ c_{2k}(a)], 0 < c_{2k}(a) <2a,$ 
and $c_2(a) \leq c_4(a) \leq c_6(a) \leq \ldots$ 
\end{Conjecture}

The proof of this fact and the possible value of the limit $\lim_{k\to \infty} c_{2k}(a) $ remain open. 

\end{Example} 

\begin{Example}
\label{ex3}
 For $n \in \mathbb{N},$ consider a polynomial 
\begin{equation}
\label{f5}
P_{0,n}(x) = \frac{x(x-1)\cdot \ldots \cdot (x-n+1)}{(a)_n}.
\end{equation}
We have $\mathcal{L}_a(P_{0,n}) = x^n,$ so the polynomial $\mathcal{L}_a(P_{0,n})$ has only real zeros lying
in the segment $[-1, 0].$ The zeros of $P_{0,n}$ are $\{0, 1, 2,  \ldots, n-1\},$ so these zeros satisfy
the sufficient conditions of Theorem~\ref{Th1.0}.

For a given $n \in \mathbb{N}$ and $s=0, 1, 2, \ldots, n,$ we consider the polynomial
\begin{eqnarray}
\label{f66}
&
P_{s,n}(x) =
\\ \nonumber &
 \frac{(x+a)(x+a+1)\cdot \ldots \cdot (x+a+s-1)\cdot x (x-1) \cdot 
\ldots \cdot (x-(n-s-1))}{(a)_n} .
\end{eqnarray}

Let $Q_{s,n}(x) = \mathcal{L}_a (P_{s,n})(x). $   Since $P_{s,n}(0)=P_{s,n}(1)= \ldots = P_{s,n}(n-s-1) =0,$ by 
Statement~\ref{st2}, we obtain $x^{n-s} \mid Q_{s,n}.$ Since $P_{s,n}(-a)=P_{s,n}(-a -1)= \ldots = P_{s,n}(-a-s +1) =0,$ by 
Statement~\ref{st2}, we obtain $(x+1)^s \mid Q_{s,n}.$ We have $\deg Q_{s,n} =n,$ and thus $Q_{s,n}(x)= C 
x^{n-s} (x+1)^s, C \in \mathbb{R}.$ The leading coefficient of  $P_{s,n}$ is equal to $\frac{1}{(a)_n},$
whence $C=1.$ Finally, we get

\begin{equation}
\label{f511}
Q_{s,n}(x) = \mathcal{L}_a (P_{s,n})(x) = x^{n-s} (x+1)^s.
\end{equation}

For $s=n,$ we have $P_{n,n}(x) =  \frac{(x+a)(x+a+1)\cdot \ldots \cdot (x+a+n-1)}{(a)_n} $ and   
$\mathcal{L}_a (P_{n,n})(x) = (1+x)^n $
(cf.  (\ref{f6})).

The zeros of $P_{s,n}$ are $\{ -(a+s-1), -(a+s-2), \ldots, -(a+1), -a, 0, 1,  \ldots, n-s-1\},$ so these zeros satisfy
the  sufficient conditions of  Theorem~\ref{Th1.0}.

\end{Example}

\begin{Example}
\label{ex4}
For a given $n \in \mathbb{N}\cup\{0\},$ consider a polynomial 
\begin{equation}
\label{f68}
Q_{n, a}(x)= (a)_n \left(x+ \frac{1}{2}\right)^n =  (a)_n \sum_{k=0}^n C_n^k \frac{x^k}{2^{n-k}}.
\end{equation}
Formula (\ref{fd2}) shows that if the set of zeros of a polynomial $Q_{n, a} = \mathcal{L}_a(P_{n, a})$ is symmetric 
with  respect to the point $\frac{1}{2},$ then  the set of zeros of the polynomial $P_{n, a}$ is  symmetric with
respect to the point $\frac{a}{2}.$  The family of polynomials  $Q_{n, a}(x)= (a)_n \left(x+ \frac{1}{2}\right)^n$ 
is of particular interest because, for every $n,$ the polynomial $Q_{n, a}$ has zeros that are, in a natural sense, 
maximally symmetric  with  respect to  the point $\frac{1}{2}.$

We have
\begin{equation}
\label{f69}
P_{n, a}(x) = \mathcal{L}_a^{-1} \left(Q_{n, a}\right)(x) = (a)_n \sum_{k=0}^n C_n^k \frac{x(x-1)(x-2)
\cdot \ldots \cdot (x-k+1)}{(a)_k 2^{n-k}}.
\end{equation}
Note that $Q_{n, a}(-1-x) = (-1)^n Q_{n, a}(x). $ Thus, by (\ref{fd2})
\begin{equation}
\label{f70}
P_{n, a}(-a-x) = (-1)^n P_{n, a}(x).
\end{equation}
Define
\begin{equation}
\label{f71}
S_{n, a}(x) = P_{n, a}\left(- \frac{a}{2} +x\right),
\end{equation}
so that
\begin{equation}
\label{f72}
S_{n, a}(- x) =  (-1)^n S_{n, a}(x).
\end{equation}
Direct computation gives
$$S_{0, a}(x)=1,$$
$$S_{1, a}(x) =x,$$
$$S_{2, a}(x)= x^2 +\frac{a}{4},$$
$$S_{3, a}(x)= x^3  +\left(\frac{3a}{4}+ \frac{1}{2}\right)x,$$
$$S_{4, a}(x)= x^4 +\left(\frac{3}{2}a+2\right) x^2 + \left(\frac{3a^2}{16}  +\frac{3a}{8}\right).$$
It is easy to check that the zeros of the polynomials $S_{n, a}, 0\leq k \leq 4,$
are purely imaginary, so all zeros of polynomials $P_{n, a}, 0\leq n \leq 4,$
have real parts equal $- \frac{a}{2}.$ For the case $a=1$ this fact was proved in \cite{vish}.

\begin{theorem}
\label{th1.0} For every $n\in \mathbb{N}\cup\{0\},$ all zeros of the polynomial
$P_{n, a}(x) = (a)_n \sum_{k=0}^n C_n^k \frac{x(x-1)(x-2)
\cdot \ldots \cdot (x-k+1)}{(a)_k 2^{n-k}}$ are simple and have real parts equal to
$- \frac{a}{2}.$ Equivalently, all zeros of the polynomial
$S_{n, a}(x) = P_{n, a}\left(- \frac{a}{2} +x\right)$ are simple and purely imaginary.
Moreover,  the zeros of polynomials $P_{n, a}$ and $P_{n+1, a}$ interlace for all
$n\in \mathbb{N}.$ Equivalently, the zeros of polynomials $S_{n, a}$ and 
$S_{n+1,a}$ interlace for all  $n\in \mathbb{N}.$
\end{theorem}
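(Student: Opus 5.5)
The plan is to identify $S_{n,a}$, after the rotation $x=iy$, with a family satisfying a three-term recurrence with positive coefficients. Reality, simplicity and interlacing of the zeros then follow from the standard Sturm-sequence (Favard-type) argument. The computed values of $S_{2,a},S_{3,a},S_{4,a}$ suggest that $i^{-n}S_{n,a}(iy)$ are, up to normalization, the Meixner--Pollaczek polynomials with $\lambda=a/2$, $\varphi=\pi/2$. I would not rely on that identification but prove the recurrence directly. The conjectured recurrence is
\begin{equation*}
P_{n+1,a}(x)=\Bigl(x+\frac a2\Bigr)P_{n,a}(x)+\frac{n(n-1+a)}{4}\,P_{n-1,a}(x),\qquad n\ge 1,
\end{equation*}
or equivalently $S_{n+1,a}(x)=xS_{n,a}(x)+c_nS_{n-1,a}(x)$ with $c_n=\frac{n(n-1+a)}{4}$. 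This is consistent with the listed cases: $c_1=a/4$ and $c_2=(a+1)/2$ reproduce $S_{2,a}$ and $S_{3,a}$.

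To prove the recurrence I will pass to generating functions. By the computation (\ref{f3}), the map $P\mapsto G_P(z):=\sum_{k\ge0}\frac{(a)_k}{k!}P(k)z^k$ is linear and injective on $\mathbb{R}[x]$. For $Q_{n,a}=(a)_n(x+\frac12)^n$ we have $S(z)=(1-z)^nQ_{n,a}\bigl(\frac{z}{1-z}\bigr)=(a)_n2^{-n}(1+z)^n$, so
\begin{equation*}
G_n(z):=G_{P_{n,a}}(z)=(a)_n\,2^{-n}(1+z)^n(1-z)^{-a-n}.
\end{equation*}
Multiplication of $P$ by $x$ corresponds to the operator $z\frac{d}{dz}$ on $G_P$. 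The recurrence is therefore equivalent to
\begin{equation*}
zG_n'(z)+\frac a2G_n(z)=G_{n+1}(z)-\frac{n(n-1+a)}{4}G_{n-1}(z).
\end{equation*}
Dividing by $G_n$, this becomes an identity between rational functions:
\begin{equation*}
\frac{nz}{1+z}+\frac{(a+n)z}{1-z}+\frac a2=\frac{(a+n)(1+z)}{2(1-z)}-\frac{n(1-z)}{2(1+z)}.
\end{equation*}
Here I used $G_{n+1}/G_n=\frac{(a+n)(1+z)}{2(1-z)}$ and $G_{n-1}/G_n=\frac{2(1-z)}{(a+n-1)(1+z)}$. This is routine to check by clearing denominators. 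Injectivity of $P\mapsto G_P$ then gives the recurrence for $P_{n,a}$, and the shift $x\mapsto x-\frac a2$ gives it for $S_{n,a}$.

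Next set $T_n(y):=i^{-n}S_{n,a}(iy)$. These are real polynomials, since by (\ref{f72}) $S_{n,a}$ contains only powers $x^{n-2j}$. They are monic of degree $n$ and satisfy
\begin{equation*}
T_{n+1}(y)=yT_n(y)-c_nT_{n-1}(y),\qquad T_0=1,\quad T_1=y,\quad c_n>0 .
\end{equation*}
The classical induction now applies. Assume $T_n$ has $n$ simple real zeros strictly interlacing with the $n-1$ zeros of $T_{n-1}$. At consecutive zeros of $T_n$ the values of $T_{n+1}$ equal $-c_nT_{n-1}$, and these alternate in sign. Near $+\infty$, and at the largest zero of $T_n$, the sign of $T_{n+1}$ is controlled by $c_n>0$, and similarly at $-\infty$. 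This yields $n+1$ simple real zeros of $T_{n+1}$ strictly interlacing with those of $T_n$. (Equivalently, Favard's theorem gives orthogonality with respect to a positive measure.) Translating back, the zeros of $S_{n,a}$ are simple and purely imaginary and interlace along the imaginary axis. Hence the zeros of $P_{n,a}$ are simple, lie on the line $\operatorname{Re}x=-\frac a2$, and the zeros of $P_{n,a}$ and $P_{n+1,a}$ interlace.

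The main obstacle is finding and verifying the right recurrence. Once the generating function $G_n$ is written down, the verification is a one-line rational identity. The coefficient $\frac{n(n-1+a)}{4}$ is what I would guess from the small cases, confirm by that identity, and, if it were off, recompute by matching the partial fractions in $z$.
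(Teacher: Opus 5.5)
Your proposal is correct. Your recurrence $S_{n+1,a}=xS_{n,a}+\frac{n(n-1+a)}{4}S_{n-1,a}$ is exactly the paper's Lemma~\ref{L1} after the index shift $n\mapsto n+1$. The rational identity you reduce it to does hold: the terms proportional to $a$ give $\frac{az}{1-z}+\frac a2=\frac{a(1+z)}{2(1-z)}$, and the terms proportional to $n$ give $\frac{2nz}{1-z^2}$ on both sides.

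The overall architecture is the same as the paper's: a three-term recurrence, the rotation $x=iy$, positivity of the recurrence coefficients, then Favard and interlacing. The difference is in how the recurrence is established. The paper expands $xS_{n-1,a}$ and $S_{n-2,a}$ in the shifted Newton basis $\prod_{j=1}^k(x-\frac a2-j+1)$. It then verifies a binomial identity for each coefficient $b_k$, with separate bookkeeping for the boundary terms. You instead pass through the injective map $P\mapsto G_P$. There $G_n=(a)_n2^{-n}(1+z)^n(1-z)^{-a-n}$ has a closed product form and multiplication by $x$ becomes $z\frac{d}{dz}$, so the recurrence reduces to a one-line check.

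Your route is shorter and less error-prone, and it explains where the recurrence comes from. In the variable $w=\frac{z}{1-z}$ one has $G_P=(1+w)^a\,\mathcal{L}_a(P)(w)$ and $z\frac{d}{dz}=w(1+w)\frac{d}{dw}$. Hence $\mathcal{L}_a(xP)=w(1+w)Q'+awQ$ with $Q=\mathcal{L}_a(P)$. This operator acts simply on powers of $w+\frac12$, because $w(1+w)=(w+\frac12)^2-\frac14$. The paper's computation, by contrast, is purely algebraic and never leaves the Newton basis.

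For the final step you give the classical Sturm-type induction directly, rather than quoting Favard's theorem and the zero properties of orthogonal polynomials. The two are equivalent, and yours is self-contained. At the extreme zeros of $T_n$ it uses only that the monic $T_{n-1}$ has the sign forced by interlacing, so $T_{n+1}=-c_nT_{n-1}$ has the opposite sign to its behaviour at $\pm\infty$.

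Your side remark identifying $T_n$ with the monic Meixner--Pollaczek polynomials with $\lambda=a/2$, $\varphi=\pi/2$ is also correct. Their monic recurrence coefficient $\frac{n(n+2\lambda-1)}{4}$ matches your $c_n$.
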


\begin{proof}
By (\ref{f69}) and  (\ref{f71}) we have
\begin{equation}
\label{f73}
S_{n, a}(x) =  (a)_n \sum_{k=0}^n \frac{C_n^k }{(a)_k 2^{n-k}} \prod_{j=1}^k \left(x- \frac{a}{2} -j +1\right).
\end{equation}

We first prove the following lemma.
\begin{Lemma}
\label{L1}
For all $n \geq 2,$  the following recurrence relation holds
\begin{equation}
\label{f74}
S_{n, a}(x) =  x S_{n-1, a}(x) + \frac{(n-1)(a+n-2)}{4} S_{n-2, a}(x).
\end{equation}
\end{Lemma}

\begin{proof}
We have
$$ x S_{n-1, a}(x) +  \frac{(n-1)(a+n-2)}{4}  S_{n-2, a}(x) = $$  
$$x (a)_{n-1} \sum_{k=0}^{n-1} \frac{C_{n-1}^k }{(a)_k 2^{n-1-k}}
 \prod_{j=1}^k \left(x- \frac{a}{2} -j +1\right)  +$$
$$ \frac{(n-1)(a+n-2)}{4} (a)_{n-2}  \sum_{k=0}^{n-2} \frac{C_{n-2}^k }{(a)_k 2^{n-2-k}}
 \prod_{j=1}^k  \left(x- \frac{a}{2} -j +1\right)   = $$
$$=    (a)_{n-1} \sum_{k=0}^{n-1} \frac{C_{n-1}^k }{(a)_k 2^{n-1-k}}
 \prod_{j=1}^{k} \left(x-  \frac{a}{2} -j+1\right)\cdot \left(x - \frac{a}{2} - k\right)  +$$   
$$  (a)_{n-1}  \sum_{k=0}^{n-1} \frac{C_{n-1}^k }{(a)_k 2^{n-1-k}} 
 \prod_{j=1}^k \left(x-  \frac{a}{2} -j+1\right) \cdot \left( \frac{a}{2} + k\right) +$$
 $$  \frac{(n-1)(a+n-2)}{4} (a)_{n-2}  \sum_{k=0}^{n-2} \frac{C_{n-2}^k }{(a)_k 2^{n-2-k}}
 \prod_{j=1}^k \left(x- \frac{a}{2} -j +1\right)  =  $$
 $$    (a)_{n-1}\sum_{k=0}^{n-1} \frac{C_{n-1}^k }{(a)_k 2^{n-1-k}}
 \prod_{j=1}^{k+1} \left(x-  \frac{a}{2} -j+1\right)  $$   
$$+(a)_{n-1} \sum_{k=0}^{n-1} \frac{C_{n-1}^k }{(a)_k 2^{n-1-k}} \cdot \left( \frac{a}{2} + k\right) 
 \prod_{j=1}^k \left(x- \frac{a}{2} -j +1\right) + $$
 $$ \frac{(n-1)}{4} (a)_{n-1}   \sum_{k=0}^{n-2} \frac{C_{n-2}^k }{(a)_k 2^{n-2-k}}
 \prod_{j=1}^k \left(x- \frac{a}{2} -j +1\right) = $$
$$  \left(\frac{a \cdot (a)_{n-1}}{2^n} +\frac{(a)_{n-1}\cdot(n-1)}{2^n}  \right)  +$$
$$  \sum_{k=1}^{n-2} 
\prod_{j=1}^k \left(x- \frac{a}{2} -j +1\right) \cdot \left(\frac{(a)_{n-1}C_{n-1}^{k-1}}{2^{n-k} (a)_{k-1}} + 
\frac{(a)_{n-1} C_{n-1}^{k}}{2^{n-k-1} (a)_k}\cdot   \left( \frac{a}{2} + k\right)  \right.  +$$
$$\left.  \frac{(n-1)}{4} \cdot 
\frac{(a)_{n-1} C_{n-2}^k}{2^{n-k-2}(a)_k} \right)  + $$
$$\prod_{j=1}^{n-1} \left(x- \frac{a}{2} -j +1\right)\cdot 
\left(\frac{(a)_{n-1} C_{n-1}^{n-2}}{2^1 (a)_{n-2}} +
\frac{(a)_{n-1} C_{n-1}^{n-1}}{2^0 (a)_{n-1}}\cdot (\frac{a}{2}+n-1) \right) + $$
$$\prod_{j=1}^n \left(x- \frac{a}{2} -j +1\right)\cdot 
\frac{(a)_{n-1} C_{n-1}^{n-1}}{2^0 (a)_{n-1}} $$
$$=\frac{(a)_n}{2^n}+ \sum_{k=1}^{n-2}  b_k
\left(x- \frac{a}{2} -j +1\right) + \prod_{j=1}^{n-1} \left(x- \frac{a}{2} -j +1\right)\cdot \frac{n(a+n-1)}{2} +$$
$$  \prod_{j=1}^n \left(x- \frac{a}{2} -j +1\right),$$
where
$$b_k =  \left(\frac{(a)_{n-1}C_{n-1}^{k-1}}{2^{n-k} (a)_{k-1}} + 
\frac{(a)_{n-1} C_{n-1}^{k}}{2^{n-k-1} (a)_k}\cdot   \left( \frac{a}{2} + k\right)  + \frac{(n-1)}{4} \cdot 
\frac{(a)_{n-1} C_{n-2}^k}{2^{n-k-2}(a)_k} \right)     $$
$$= \frac{(a)_{n-1}}{(a)_k 2^{n-k}}\left((a+k-1) C_{n-1}^{k-1} +2C_{n-1}^{k}\cdot   \left( \frac{a}{2} + k\right)  +
(n-1) C_{n-2}^{k} \right)       $$
$$ = \frac{(a)_{n-1}}{(a)_k 2^{n-k}}\left(\frac{(a+k-1)(n-1)!}{(k-1)!(n-k)!} +\frac{(2k+a)(n-1)!}{k!(n-1-k)!} 
+ \frac{(n-1)!}{k!(n-2-k)!}   \right)    $$
$$    =  \frac{(a)_{n-1}}{(a)_k 2^{n-k}} \cdot \frac{(n-1)!}{k!(n-k)!} \cdot \left(k(a+k-1) +(2k+a)(n-k) +(n-1-k)(n-k)\right)$$
$$ =  \frac{(a)_{n-1}}{(a)_k 2^{n-k}} \cdot \frac{(n-1)!}{k!(n-k)!} \cdot n(a+n-1)  =(a)_n \cdot \frac{C_n^k}{2^{n-k}(a)_k}.   $$
Finally, we obtain
$$x S_{n-1, a}(x) +  \frac{(n-1)(a+n-2)}{4}  S_{n-2, a}(x) = $$
$$ \frac{(a)_n}{2^n}+ \sum_{k=1}^{n-2}  (a)_n \cdot \frac{C_n^k}{2^{n-k}(a)_k}
\prod_{j=1}^k \left(x- \frac{a}{2} -j +1\right) + \prod_{j=1}^{n-1} \left(x- \frac{a}{2} -j +1\right)\cdot \frac{n(a+n-1)}{2} +$$
$$  \prod_{j=1}^n \left(x- \frac{a}{2} -j +1\right)  = S_{n, a}(x). $$
Lemma~\ref{L1} is proved.
\end{proof}

We now make a change of variables to pass from the imaginary axis to the real axis. 
We define a new sequence of polynomials
$$M_{n, a}(x) = i^{-n} S_{n, a}(ix).$$
Using (\ref{f72}), we get that $M_{n, a}$ is a real polynomial. From (\ref{f74}), we obtain
$$S_{n, a}(ix) =  ix S_{n-1, a}(ix) +\frac{(n-1)(a+n-2)}{4}  S_{n-2, a}(ix),   $$
or
\begin{equation}
\label{f76}
M_{n, a}(x) = x M_{n-1, a}(x) -\frac{(n-1)(a+n-2)}{4} M_{n-2, a}(x).
\end{equation}
We also know that
$$M_{0, a}(x)=1,   M_{1, a}(x) =x.$$
We now use the classical theorem of Favard.

{\bf Theorem G} (J.~Favard \cite[pp. 15–16]{RaSc}).  {\it     
Let $(K_n(x))_{n=0}^\infty \subset \mathbb{R}[x],  \deg K_n =n,$ be a sequence of
real polynomials satisfying a recurrence relation of the form
$$K_n(x)=(x-a_n) K_{n-1}(x) -c_n K_{n-2}(x),     $$
where $K_0 =1$ and $c_n >0$ for all $n.$   Then the sequence $(K_n(x))_{n=0}^\infty$ 
forms a sequence of orthogonal polynomials with respect to some positive measure on the 
real line.}

Combining (\ref{f76}) with Favard's theorem, we conclude  that the sequence $(M_{n, a}(x))_{n=0}^\infty$ 
forms a sequence of orthogonal polynomials with respect to some positive measure on the 
real line. 

A fundamental property of orthogonal polynomial sequences is that all their zeros are real and simple, 
and that the zeros of consecutive polynomials interlace. Thus, all zeros of the polynomials $M_{n, a}$ 
are real and simple,  and zeros of consecutive polynomials interlace. Hence, all the zeros of the 
polynomials $S_{n, a}$ are simple and purely imaginary,  and zeros of consecutive polynomials interlace. 
Equivalently, all zeros of the polynomials  $P_{n, a}$ are simple and have real parts equal to
$- \frac{a}{2},$  and zeros of consecutive polynomials interlace.

Theorem~\ref{th1.0} is proved.
\end{proof}

\end{Example}

\section{Open Problems}

In connection with the examples considered, we formulate the following open problems.

\begin{Problem}
\label{pr4}
Let $a>0$ and
\begin{eqnarray}
\label{f13}
& U_n = \{P\in \mathbb{R}[x]  :  \deg P =n\  \mbox{and} 
\\ \nonumber &    
\mathcal{L}_a(P) \   \mbox{ has all real zeros 
in the segment }\  [-1, 0]  \}. 
\end{eqnarray}

For $n\in \mathbb{N},$ define $r_n$  by
\begin{equation}
\label{f13}
r_n = \sup_{P\in U_n} (\min\{|z| \in \mathbb{C} : P(z)=0 \}).
\end{equation}
The problem is to describe (or estimate) the  sequence 
$(r_n)_{n=1}^\infty.$ Obviously, $r_1=a.$  By Example~\ref{ex2.1} , 
 $r_2 \geq \frac{a+\sqrt{a(a+1)} }{2} >a.$ Example~\ref{ex3} and 
Statement 4 of Theorem~\ref{th1} show that $r_{2m+1}=a, m \in \mathbb{N}.$ 
Our conjecture is that the sequence $\{r_n\}_{n=1}^\infty$ is bounded from above.
\end{Problem}

\begin{Problem}
\label{pr5}
For $n\in \mathbb{N},$  define   $R_n$ by
\begin{equation}
\label{f13}
R_n = \sup_{P\in U_n} (\max\{|z| \in \mathbb{C} : P(z)=0 \}).
\end{equation}
The problem is to describe (or estimate)  the sequence 
$(R_n)_{n=1}^\infty.$ Obviously, $R_1=r_1=a.$ Example~\ref{ex3}
shows that $R_n \geq a+n-1.$ Note that item 1 of  Statement~\ref{st1} and 
formula~(\ref{fd2}) show that all real zeros of a real polynomial $P\in U_n$  
lie in the segment $[-(a+n-1), n-1].$   Our conjecture is that $R_n = a+n-1.$ 
\end{Problem}

\begin{Problem}
\label{pr6}
Let $P = C(x-x_1)(x-x_2) \cdot \ldots \cdot (x-x_n)\in \mathbb{R}[x],$  
$x_1 \leq x_2 \leq \ldots \leq x_n,$  be a polynomial such that  $\mathcal{L}_a(P) $   has all real zeros 
in the segment  $  [-1, 0].$ 
Is it true that there exists a constant $d(a)\geq 1,$ independent of n, such that for every 
$j=1, 2, \ldots, n-1,$ we have $x_{j+1} - x_j \leq d(a)?$   
Note that for every polynomial $P$ satisfying the sufficient conditions of 
Theorem~B, the statement is true with $d(a)=a.$ 
\end{Problem}

\end{document}